\newcommand{\R}{{\mat R}}
\newcommand{\N}{{\mat N}}
\newcommand{\C}{{\mat C}}
\newcommand{\Sp}{{\mat S}}
\newcommand{\be}{\begin{eqnarray}}
\newcommand{\ben}{\begin{eqnarray*}}
\newcommand{\en}{\end{eqnarray}}
\newcommand{\enn}{\end{eqnarray*}}
\newcommand{\mat}{\mathbb}
\newtheorem{theorem}{Theorem}[section]
\newtheorem{lemma}[theorem]{Lemma}
\newtheorem{remark}[theorem]{Remark}
\definecolor{rot}{rgb}{1,0,0}
\definecolor{hw}{rgb}{0,0,1}
\begin{document}
\renewcommand{\theequation}{\arabic{section}.\arabic{equation}}
%\begin{titlepage}
\title{\bf
 Increasing stability for the inverse source problems in electrodynamics
%Determining penetrable obstacles containing embedded objects from acoustic or electromagnetic
%far-field data
}
\author{
%Guanghui Hu\thanks{School of Mathematical Sciences, Nankai University,
%Tianjing 300071, China ({\tt ghhu@nankai.edu.cn})}
Suliang Si\thanks{School of Mathematics and Statistics, Shandong University of Technology,
Shandong, 255049, China ({\tt sisuliang@amss.ac.cn})}}
\date{}
%\end{titlepage}

%\vspace{.2in}

%\begin{document}
%\renewcommand{\theequation}{\arabic{section}.\arabic{equation}}

\maketitle

\begin{abstract}
We are concerned with increasing stability in the inverse source problems for the time-dependent Maxwell equations in $\R^3 $, where the source term is compactly supported in both time and spatial variables. By using the Fourier transform, sharp bounds of the analytic continuation and the Huygens' principle, increasing stability estimates of the $L^2$-norm of the
source function are obtained. The main goal of this paper is to understand increasing stability for the Maxwell equations in the time domain.
\end{abstract}

%\begin{keywords}
%Uniqueness, inverse scattering problem, far-field pattern, penetrable obstacle,
%transmission problem, interior transmission problem, embedded obstacles
%\end{keywords}
%
%\begin{AMS}
%78A46, 65C30
%\end{AMS}

\section{Introduction}
\subsection{statement of the problem}
Consider the time-dependent Maxwell equations in a homogeneous medium 
\begin{equation}\label{EH}
\mu\partial_t\textbf{H}(\textbf{x},t)+\nabla\times \textbf{E}(\textbf{x},t)=0, \quad \varepsilon\partial_t\textbf{E}(\textbf{x},t)-\nabla\times \textbf{H}(\textbf{x},t)=-\sigma \textbf{E}+\tilde{\textbf{F}}(\textbf{x},t), \   \textbf{x}\in\R^3, \ t>0,
\end{equation}
where $\textbf{E}$ and $\textbf{H}$ are the electric and magnetic fields, respectively, the source function $\tilde{\textbf{F}}$ is known as the electric current density, $\varepsilon$ and $\mu$ are the dielectric permittivity and the magnetic permeability, respectively, and $\sigma$ is the electric conductivity and is assumed to be zero. 

Let $n=\frac{\varepsilon}{\mu}$. 
Eliminating the magnetic field $\textbf{H}$ from (\ref{EH}), we obtain the Maxwell system for the electric field $\textbf{E}$:
\begin{equation}\label{eq1}
n\partial_t^2\textbf{E}(\textbf{x},t)+\nabla\times(\nabla\times \textbf{E}(\textbf{x},t))=\frac{1}{\mu}\partial_t\tilde{\textbf{F}}(\textbf{x},t)=:\textbf{F}(\textbf{x},t), \  \textbf{x}\in\R^3, t>0,
\end{equation}
which is supplemented by the homogeneous initial conditions
\begin{equation}\label{eq2}
\textbf{E}(\textbf{x},0)=\partial_t\textbf{E}(\textbf{x},0)=0, \  \textbf{x}\in\R^3.
\end{equation}
In this work, we consider the following inverse source problems:

\noindent

\textit{\textbf{IP1}}:
Assume $\textbf{F}(\textbf{x},t)=\textbf{f}(\textbf{x})g(t)$, where $\textbf{f}\in\R^3,\ g\in\R$ have compact supports such that supp $\textbf{f}$ $\subset\{\textbf{x}\in\mathbb{R}^3| \ |\textbf{x}|<R\}$ and supp $g$ $\subset(0,T_0)$ where $R>0,\ T_0>0$ are constants. Then we concern the inverse problem of recovering the source term $\textbf{f}(\textbf{x})$ from the Dirichlet boundary data measured on $\partial B_R\times(0,T_0)$.

\textit{\textbf{IP2}}:
Assume  $n\in(0,b^2)$ for $b>1$. The source function $\textbf{F}(\textbf{x},t)$ has compact supports. The inverse problem is to determine  source term $\textbf{F}(\textbf{x},t)$ from the measurement $\textbf{E}(\textbf{x},t)$, $\textbf{x}\in\partial B_R$, $t\in(0,T_0)$.

\textit{\textbf{IP3}}:
Assume $\textbf{F}(\textbf{x},t)=\textbf{f}(x_1,x_2,t)g(x_3)$, where the function $\textbf{f}\in\R^3$ and $g\in\R$ have compact supports. The inverse problem is to determine source term $\textbf{f}$ from the boundary observation data $\textbf{E}(\textbf{x},t)$, $\textbf{x}\in\partial B_R$, $t\in(0,T_0)$.
\subsection{Motivations}
Motivated by significant applications, the inverse source problems, as an important research subject
in inverse scattering theory, have continuously attracted much attention by many researchers \cite{Arridge1999Optical,Bleistein1977Nonuniqueness,
Isakov1990inverse,Jiang2016Inverse,Yamamoto1999Stability}. Consequently, a great deal of mathematical and numerical results are available, especially for
the acoustic waves or the Helmholtz equations. In general, it is known that there is no uniqueness for
the inverse source problem at a fixed frequency due to the existence of non-radiating sources \cite{Hauer2005On}.
Therefore, additional information is required for the source in order to obtain a unique solution, such as
to seek the minimum energy solution \cite{MD1999}. From the computational point of view, a more challenging issue
is the lack of stability. A small variation of the data might lead to a huge error in the reconstruction.
Recently, it has been realized that the use of multi-frequency data is an effective approach to overcome the
difficulties of non-uniqueness and instability which are encountered at a single frequency. The first increasing
stability result in \cite{Bao2010multi-frequency} was obtained in a more particular case by quite different (spatial
Fourier analysis) methods. A topical review can be
found in \cite{BCLZ2014} on the inverse source problems as well as other inverse scattering problems by using multiple
frequencies to overcome the ill-posedness and gain increased stability. 

For electromagnetic waves, Ammari et al. \cite{ABF2002} showed uniqueness and stability, and presented an inversion
scheme to reconstruct dipole sources based on a low-frequency asymptotic analysis of the time-harmonic
Maxwell equations. In \cite{AM2006}, Albanese and Monk discussed uniqueness and non-uniqueness of the inverse
source problems for Maxwell’s equations. A monograph can be found in \cite{RK1994} on general inverse problems for
Maxwell’s equations.  In \cite{BaoStability2020}, Bao et al. develop new techniques and establish an
increasing stability theory in the inverse source scattering problems for electromagnetic
waves. Due to the inverse source problem of electromagnetic waves, there is a phenomenon of increasing stability. A natural question is whether there is also some increasing stability in the inverse source problem of time-dependent Maxwell equations.
Thus the main goal of this paper is to understand increasing stability for the Maxwell equations in the time domain.

\subsection{Known results}
Inverse sourse problems have many significant applications in scientific and engineering areas. For instance, detection of submarines and non-destructive measurement of industrial objects can be regarded as recovery of acoustic sources from boundary measurements of the pressure. Other application include biomedical imaging optical tomography \cite{Arridge1999Optical,Isakov1990inverse}, and geophysics. For a mathematical overview of various inverse source problems, we can see that uniqueness and stability are discussed in \cite{Isakov1990inverse}. For inverse source problems in time domain, it is solved as hyperbolic systems by using Carleman estimate \cite{Klibanov1999Inverse} and unique continuation; we refer to \cite{Choulli2006Some, Jiang2016Inverse,Yamamoto1999Stability,Yamamoto1999Uniqueness} for an incomplete list. For time-harmonic inverse source problems, it is well-known that there is no uniqueness for the inverse source problem with a single frequency due to the existence of non-radiating sources \cite{Bleistein1977Nonuniqueness,Hauer2005On}. Therefore, the use of multiple frequencies data is an effective way to overcome non-uniqueness and has received a lot of attention in recent years. \cite{Eller2009Acoustic} show the uniqueness and numerical results for Helmholtz equation with multi-frequency data. And in \cite{Bao2010multi-frequency}, Bao et al. firstly get increasing stability for Helmholtz equation by direct spatial Fourier analysis methods. Then in \cite{Cheng2016Increasing}, a different method involving a temporal
Fourier transform, sharp bounds of the analytic continuation to higher wave numbers were used to derive increasing stability bounds for the three dimensional Helmholtz equation. Also, \cite{Cheng2016Increasing} firstly combined the Helmholtz equation and associated hyperbolic equations to get the stability results. Later in \cite{Li2017Increasing} and \cite{BaoStability2020}, increasing stability were extended to Helmholtz equation and Maxwell's equation in three dimension.

Motivated by those works, we are interested in the increasing stability for the time-dependent Maxwell equations. Since by Fourier transform, inverse source problem of the time-dependent Maxwell equations can be reduced to that of the associated Electromagnetic with multi-frequency data, we derive our increasing stability estimate by using sharp bounds of analytic continuation given in \cite{Cheng2016Increasing}. Instead of Cauchy data on measure boundary used in the works mentioned above, we use only Dirichlet data on the lateral boundary. For the uniqueness, we just prove it in time domain without Carleman estimate.

The rest of this paper is organized as follows. In Section \ref{Thm} and \ref{pre}, we state our main results and well-posedness of the direct problem. Sections \ref{pre} is devoted to the increasing stability of inverse problem 1. In Section \ref{th3}, we prove general source terms in homogeneous medium. Section \ref{th4}, we establish the increasing stability of general source term.
Finally, the increasing stability of time-dependent source term can be established by using the boundary Dirichlet data.

\section{Main results}\label{Thm}

Let $B_R=B(0, R)=\{\textbf{x}\in \R^3 | \ |\textbf{x}|<R\}$ for  $R>0$. 
We always assume that the source function $\textbf{F}(\textbf{x},t)$ is required to be real-valued, which implies that 
$\widehat{\textbf{F}}(-\xi,-\omega)=\overline{\textbf{F}}(\xi,\omega)$ for all 
$(\xi,\omega)\in \R^3\times\R$. In addition, we  also assume that $\nabla\cdot \textbf{\textbf{F}}(\textbf{x},t)=0$. It follows from that the source term $\textbf{F}(\textbf{x},t)$ can be decomposed into a sum of radiation and non-radiating parts. The non-radiating part cannot be determined and gives rise to the non-uniqueness issue. By the divergence-free condition of $\textbf{F}(\textbf{x},t)$, we eliminate non-radiating sources in order to ensure the uniqueness of the inverse problem.
First we show increasing stability result for the time-dependent inverse problem (IP1). 
Let $\textbf{F}(\textbf{x},t)=\textbf{f}(\textbf{x})g(t)$,  where $\textbf{f}\in\R^3$ is compactly supported in $B_R$ and $g\in\R$ is supported in $(0,T_0)$ for some $T_0>0$.
It is supposed that $\textbf{f}(\textbf{x})\in H^1(\R^3)^3$ and $g(t)\in H^4(0,\infty)$.
The one-dimensional Fourier transform of $g$ with respect to the time variable $t$ is defined as
\[\widehat{g}(\omega):=\frac{1}{\sqrt{2\pi}}\int_\R g(t)e^{-i\omega t}dt=
\frac{1}{\sqrt{2\pi}}\int_0^\infty g(t)e^{-i\omega t}dt.
 \]
We suppose there exist  a number $b>1$ and a constant $\delta>0$ such that 
\be\label{af}
|\widehat{g}(\omega)|\geq \delta>0 \quad\mbox{for all}\quad \omega\in (0,b).
\en

Physically, the parameter $b$ in (2.4) is associated with the bandwidth of the temporal signal $g(t)$. The condition (2.4) covers a large class of functions. For example, if 
$g(t)=e^{-(t-1)^2/\eta}\chi(t)$ with some $\eta>0$ and $\chi(t)\in C_0^\infty(\R)$ such that $\chi(t)=0$ for  $t\notin[0, T_0]$, the one can always find the parameters $b>0$ and $\delta>0$ such that (2.4) holds true. 
Since the source function is real valued, (2.4) holds true for all $\omega\in(-b, b)$.
We remark that the interval $(0, b)$ in (2.4) can be replaced by $(\omega_0-b, \omega_0+b)$ for some $\omega_0\geq 0$. In this paper we take $\omega_0=0$ for simplicity.  The condition (2.8) is also similar.

Define a boundary operator 
\begin{equation}
T(\textbf{E}\times\nu)=(\nabla\times\textbf{E})\times\nu \quad \mbox{on}   \   \partial B_R\times(0,T_0),
\end{equation}
where $\nu$ is the unit normal vector on $\partial B_R$.

In the following theorem, we establish increasing stability estimate of the $L^2$-norm of $\textbf{f}$ about $b$.  
\begin{theorem}\label{TH1} 
Let the condition \eqref{af} hold and
let $T>2R+T_0$.  Assume that $g(t)$ is given and  $||\textbf{f}||_{H^1(\R^3)^3}\leq M$ where $M>1$ is a constant. If $\nabla\cdot \textbf{f}(x)=0$, then there exists a constant $C>0$ depending on $\delta$, $T_0$, $n$ and $R$ such that 
\begin{equation}\label{es0}
   ||\textbf{f}||_{L^2(\R^3)^3}^2\leq  C(b^5\epsilon^2+\frac{M^2}{b^{\frac{4}{3}}|\ln \epsilon|^{\frac{1}{2}}}),
\end{equation}
where $\epsilon=\Big(\int_0^T\int_{\partial B_R}\big(|T(\textbf{E}\times\nu)|^2+|\textbf{E}\times\nu|^2 \big)ds(\textbf{x})dt\Big)^{\frac{1}{2}}$.
\end{theorem}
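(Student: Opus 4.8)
The plan is to pass to the frequency domain via the temporal Fourier transform, reduce \emph{IP1} to a multi-frequency inverse source problem for the time-harmonic Maxwell system, and then combine a low-frequency stability estimate with a sharp analytic-continuation bound to higher frequencies. First I would Fourier transform \eqref{eq1}--\eqref{eq2} in $t$: writing $\widehat{\textbf{E}}(\textbf{x},\omega)$ for the transform of $\textbf{E}$, the equation becomes $\nabla\times(\nabla\times\widehat{\textbf{E}})-n\omega^2\widehat{\textbf{E}}=\textbf{f}(\textbf{x})\,\widehat{g}(\omega)$, a time-harmonic Maxwell equation with wavenumber $k=\omega\sqrt{n}$ and source $\textbf{f}\,\widehat{g}(\omega)$. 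Because $T>2R+T_0$ and the Huygens' principle holds in $\R^3$, the field on $\partial B_R$ vanishes for $t>T$, so the temporal transforms of $\textbf{E}\times\nu$ and $T(\textbf{E}\times\nu)$ are well defined and, crucially, analytic in $\omega$; Parseval's identity then lets me control the frequency-domain boundary integrals over all $\omega$ by the data quantity $\epsilon$ (the time integral over $(0,T)$ equals the full line integral since the trace is supported there).

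Next I would convert the boundary data into information about $\widehat{\textbf{f}}$. Multiplying the frequency-domain equation by suitable solutions of the homogeneous Maxwell system in $B_R$ (e.g. vector plane waves $\textbf{p}\,e^{i\textbf{x}\cdot\xi}$ with $|\xi|=k$ and $\textbf{p}\cdot\xi=0$) and integrating by parts via the Stokes/Green identity for the $\nabla\times\nabla\times$ operator, the volume term produces the Fourier transform $\widehat{\textbf{f}}(\xi)$ on the sphere $|\xi|=k$, while the boundary terms are exactly the measured tangential traces paired through the operator $T$. Using the lower bound $|\widehat{g}(\omega)|\ge\delta$ on $(0,b)$, this identity yields
\[
\int_{|\xi|\le b\sqrt{n}}|\widehat{\textbf{f}}(\xi)|^2\,d\xi\;\le\;C\,b^{5}\,\epsilon^{2},
\]
where the divergence-free hypothesis $\nabla\cdot\textbf{f}=0$ guarantees that the transverse test fields recover all of $\widehat{\textbf{f}}$ on each Ewald sphere (the longitudinal, non-radiating component being annihilated), and the power $b^{5}$ accumulates from the Jacobian $d\xi=k^2\,dk\,d\sigma$ together with the frequency weights introduced by $T$.

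It then remains to estimate the high-frequency tail $\int_{|\xi|>b\sqrt{n}}|\widehat{\textbf{f}}|^2\,d\xi$. Since $\mathrm{supp}\,\textbf{f}\subset B_R$, the map $\xi\mapsto\widehat{\textbf{f}}(\xi)$ is entire of exponential type $R$; accordingly I would encode the spherical $L^2$-masses in a single function $I(s)$ of the radial frequency that is analytic in $s$, is bounded by a multiple of $\epsilon^2$ for $s\le b\sqrt{n}$, and whose growth is governed by $M$ through $\|\textbf{f}\|_{H^1}\le M$ and the type $R$. The sharp analytic-continuation estimate of \cite{Cheng2016Increasing} then bounds $I(s)$ for $s>b\sqrt{n}$ by an expression in which the data term is amplified by an exponential factor in $s$ while the complementary contribution decays like $M^2/s^2$; choosing the cut-off $s$ to balance these two effects against $\epsilon$ yields the tail bound $M^2\big/\big(b^{4/3}|\ln\epsilon|^{1/2}\big)$. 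Adding this to the low-frequency estimate and invoking Parseval gives \eqref{es0}.

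The main obstacle I anticipate is twofold. First, in the vector setting the integral identity must be arranged so that $T$ and the tangential trace $\textbf{E}\times\nu$ together reproduce $\widehat{\textbf{f}}$ on each sphere without losing the frequency-dependent constants; tracking these constants is what fixes the exponent $b^{5}$, and the condition $\nabla\cdot\textbf{f}=0$ must be used decisively to discard the longitudinal part and restore uniqueness. Second, and most delicate, is the analytic-continuation step: one has to verify the hypotheses of the sharp bound for the radial frequency function $I(s)$ (analyticity and the correct growth in terms of $M$ and $R$) and then carry out the optimization of the cut-off precisely enough to extract the exponents $4/3$ and $1/2$ rather than a weaker logarithmic rate. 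The remaining ingredients---Parseval, the Jacobian bookkeeping, and the triangle-inequality assembly---are routine.
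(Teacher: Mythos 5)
Your proposal is correct and follows essentially the same route as the paper: testing the equation against transverse plane waves to obtain $|\textbf{p}\cdot\widehat{\textbf{f}}(\kappa_p\textbf{d})|,|\textbf{q}\cdot\widehat{\textbf{f}}(\kappa_p\textbf{d})|\leq Cb\epsilon$ on $|\xi|\leq\sqrt{n}b$ via Huygens' principle and the lower bound on $\widehat{g}$, discarding the longitudinal component through $\nabla\cdot\textbf{f}=0$, and then bounding the tail by the analytic-continuation lemma of Cheng--Isakov--Lu applied to $I(s)$ with the same cut-off optimization. The only cosmetic difference is that the paper works directly in the time domain with the test field $\textbf{p}e^{-i(\kappa_p\textbf{x}\cdot\textbf{d}+\omega t)}$ rather than first taking the temporal Fourier transform, which yields the identical integral identity.
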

\begin{remark}
There are two parts in the stability estimates: the first  parts are the data discrepancy, while this second parts comes from the high frequency tail of the function. 
The coefficient appearing in the Lipschitz part of $(\ref{es0})$ is polynomial type. However,  since $b$ is fixed in practice,  these coefficients are constants and do not pose any problem. 
It is clear to conclude that the ill-posedness of the inverse time-dependent source problem decreases as the parameter $b$ increases. This is in consistent with the increasing stability results of \cite{BaoStability2020} in the frequency domain: the ill-posedness decreases when the width of the wavenumber interval $(0, b)$ increases. 
\end{remark}
Next, we state the stability estimate for the second inverse problem (IP2).
Let $n\in(0,b^2)$, $b>1$. In the following theorem, we establish the increasing stability estimate of $L^2$-norm of $\textbf{F}$ about $b$.
Let $\textbf{E}=\textbf{E}(\textbf{x},t;n)$ satisfy (\ref{eq1}) and (\ref{eq2}).
\begin{theorem}\label{TH3} 
Let $\textbf{F}(\textbf{x},t)\in H^4([0,\infty); H^1(\R^3))^3$ be such that supp $\textbf{F}(\textbf{x},t)$ $\subset B_R\times(0,T_0)$ and let $T>2Rn+T_0$. Assume that there exists $M>1$ and $\|\textbf{F}\|_{H^4([0,\infty); H^1(\R^3))^3}\leq M$.
Then there exists a constant $C>0$ depending on  $T_0$ and $R$ such that
\begin{align}\label{es2}
\|\textbf{F}\|^2_{L^2(\R^4)} \leq  C\Big(b^{11}\,\epsilon^2+\frac{M^2}{b|\ln\epsilon|^{\frac{2}{5}}}\Big).
\end{align}
where $\epsilon=\sup\limits_{n\in(0,b^2)}\Big(\int_0^T\int_{\partial B_R}\big(|T(\textbf{E}\times\nu)|^2+|\textbf{E}\times\nu|^2 \big)ds(\textbf{x})dt\Big)^{\frac{1}{2}}$.
\end{theorem}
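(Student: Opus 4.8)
The plan is to mirror the strategy already used for Theorem \ref{TH1}, adapting it to a source that depends on both space and time. First I would apply the one-dimensional Fourier transform in $t$ to \eqref{eq1}--\eqref{eq2}. Because of the homogeneous initial data \eqref{eq2}, the transformed field $\widehat{\textbf{E}}(\textbf{x},\omega;n)$ solves the time-harmonic Maxwell system
\begin{equation*}
\nabla\times(\nabla\times\widehat{\textbf{E}})-n\omega^2\,\widehat{\textbf{E}}=\widehat{\textbf{F}}(\textbf{x},\omega),\qquad \textbf{x}\in\R^3,
\end{equation*}
so that the effective wavenumber is $\kappa=\sqrt{n}\,|\omega|$, and varying $n\in(0,b^2)$ together with $\omega$ sweeps out a band of wavenumbers. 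Since $\textbf{F}$ is compactly supported in $B_R\times(0,T_0)$, its full space--time Fourier transform $\widehat{\textbf{F}}(\xi,\omega)$ is entire of exponential type by Paley--Wiener, and by Plancherel the target quantity is $\|\textbf{F}\|_{L^2(\R^4)}^2=\|\widehat{\textbf{F}}\|_{L^2(\R^4)}^2$. The divergence-free hypothesis eliminates the non-radiating part and reduces the curl-curl operator on $\widehat{\textbf{E}}$ to a componentwise Helmholtz operator, which is what makes the scalar continuation machinery applicable.

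Next I would convert the time-domain measurement into frequency-domain information. Using Huygens' principle, the hypothesis $T>2Rn+T_0$ guarantees that the window $(0,T)$ records the entire signal, so Plancherel in $t$ lets me replace $\epsilon^2$ by an integral over $\omega$ of the frequency-domain boundary quantities $|T(\widehat{\textbf{E}}\times\nu)|^2+|\widehat{\textbf{E}}\times\nu|^2$. A Green's/energy identity for the operator $\nabla\times(\nabla\times\,\cdot\,)-n\omega^2$, integrated over $B_R$ and combined with the divergence-free condition, then relates these boundary integrals to the content of $\widehat{\textbf{F}}(\cdot,\omega)$ on the Ewald sphere $|\xi|=\kappa$. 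Taking the supremum over $n\in(0,b^2)$ and integrating in $\omega$ yields the direct (Lipschitz) estimate
\begin{equation*}
\int_{B}|\widehat{\textbf{F}}(\xi,\omega)|^2\,d\xi\,d\omega\leq C\,b^{\,p}\,\epsilon^2
\end{equation*}
on a low-frequency region $B$, where the polynomial power $p$ in $b$ accounts for the Jacobian of the $(n,\omega)\mapsto\kappa$ change of variables and the vector structure.

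The remaining two ingredients are standard to this circle of ideas. For the intermediate band I would invoke the sharp analytic continuation bounds of \cite{Cheng2016Increasing}: since $\widehat{\textbf{F}}$ is entire of exponential type and is controlled by $C b^{p}\epsilon$ on $B$ and by $M$ globally, one propagates the bound to a larger radius $K>b$ at the cost of a factor that, after optimization in $K$, degrades like a negative power of $|\ln\epsilon|$. For the genuinely high frequencies $|\xi|^2+\omega^2\geq K^2$, the a priori bound $\|\textbf{F}\|_{H^4([0,\infty);H^1(\R^3))^3}\leq M$ together with Parseval controls the tail by $C M^2/K^{2s}$ for the Sobolev exponent $s$ coming from the one spatial and four temporal derivatives.

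Finally I would add the three contributions and optimize the cut-off $K$ (equivalently the analytic-continuation radius) to balance the continuation loss against the tail, which produces the stated form $C\big(b^{11}\epsilon^2+M^2 b^{-1}|\ln\epsilon|^{-2/5}\big)$. I expect the main obstacle to be the third step: carrying the vector-valued, divergence-free Maxwell structure through the Green's identity and through the continuation lemma (which is naturally stated for scalar entire functions), and tracking the exact powers so that the change of variables in $(n,\omega)$ and the $H^4$-in-time, $H^1$-in-space regularity combine to give precisely the exponents $11$ and $2/5$ rather than merely some admissible polynomial and logarithmic rates.
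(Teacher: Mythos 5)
Your overall strategy coincides with the paper's: test the equation against the plane waves $\textbf{p}e^{-i(\kappa_p\textbf{x}\cdot\textbf{d}+\omega t)}$ and $\textbf{q}e^{-i(\kappa_p\textbf{x}\cdot\textbf{d}+\omega t)}$ (equivalently, your Fourier-transform-plus-Green's-identity step), use Huygens' principle to kill the terms at $t=0,T$, read off $|\textbf{p}\cdot\widehat{\textbf{F}}(\xi,\omega)|\le Cb^2\epsilon$ on the set swept out by $|\xi|^2=n\omega^2$ as $n$ ranges over $(0,b^2)$, continue analytically via the Cheng--Isakov--Lu lemma, control the remainder by the a priori regularity, and optimize the cut-off. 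The divergence-free condition is used in the paper on the \emph{source}, to get $\textbf{d}\cdot\widehat{\textbf{F}}(\xi_p,\omega)=0$ so that the two polarizations $\textbf{p},\textbf{q}$ recover $|\widehat{\textbf{F}}|^2$ by Pythagoras; your phrasing (reducing the curl-curl operator on $\widehat{\textbf{E}}$ to a componentwise Helmholtz operator) is not the mechanism actually needed, though this is a presentational rather than substantive difference.

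The genuine gap is in your frequency decomposition. You split $\R^4$ into a low-frequency ball $B$, an intermediate annulus reached by continuation, and a high-frequency tail $|\xi|^2+\omega^2\ge K^2$ controlled by Sobolev decay. But the set on which the data determine $\widehat{\textbf{F}}$ is the \emph{cone} $E_s=\{|\xi|^2=n\omega^2,\ n\in(0,s^2),\ |\omega|<s\}=\{|\xi|<s|\omega|,\ |\omega|<s\}$, not a ball: its complement contains the region $E_s^2=\{|\xi|\ge s|\omega|\}$, which reaches all the way down to the origin (e.g.\ any point with $\omega=0$, $\xi\ne0$). No amount of analytic continuation in $s$ and no high-frequency Sobolev tail bound of the form $CM^2/K^{2s}$ controls $E_s^2\cap\{|\xi|^2+\omega^2<K^2\}$, so your three-piece decomposition leaves this set uncovered. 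The paper must therefore use the three-way split $\R^4=E_s\cup E_s^1\cup E_s^2$, where $E_s^1=\{|\omega|\ge s,\ |\xi|\le s|\omega|\}$ is handled by a genuine regularity/tail bound $I_1(s)\le CM^2/s^2$, while $E_s^2$ is handled by a separate argument exploiting that for large $s$ this region is a thin slab in the $\omega$-direction (of width $|\xi|/s$ for each $\xi$), giving $I_2(s)\le CM^2/s$; this last estimate is in fact the bottleneck that produces the weak logarithmic rate in \eqref{es2}. A second, smaller point: the auxiliary function $I(s)$ here grows like $|s|^6e^{2R|s_1s_2|+T|s_2|}$ (exponential of a \emph{quadratic} in $s$, because the spatial radius of $E_s$ scales like $s^2$), so the continuation lemma must be applied to $e^{-(\Delta+1)s-(\Delta+1)s^2}I(s)$ rather than to the linear-exponential normalization you used implicitly in Theorem \ref{TH1}; this changes the optimization of $s$ (to $s\sim b^{1/2}|\ln\epsilon|^{1/5}$) and hence the exponents in the final estimate.
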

Finally, we present the increasing stability for the third inverse problem (IP3).
Assume that $\textbf{F}(\textbf{x},t)$ takes the form
\begin{equation}\label{3F}
\textbf{F}(\textbf{x},t)=\textbf{f}(\tilde{x},t)g(x_3), \quad \tilde{x}=(x_1,x_2)\in\R^2,  \quad   t\in(0,+\infty),
\end{equation}
%where $f$ is compactly supported on $\widetilde{B}_{R_0}\times(0,T_0)$ for some $0<R_0<R/\sqrt{2}$ and $g\in H^1(\R)$ is supported on $(-R_0,R_0)$. 
where $\textbf{f}(\tilde{x},t)$ has compact supports such that supp $\textbf{f}$ $\subset$ $\widetilde{B}_{R_0}\times(0,T_0):=\{(\tilde{x},t)\in\R^2\times\R
|\ |\tilde{x}|<R_0,\ 0<t<T_0\}$. Suppose that $g$ is given and supported in $(-R_0, R_0)$ for some $0<R_0<R/\sqrt{2}$. 

The Fourier transform of  $g(x_3)$ of space variable $x_3$ is given by
\begin{equation}
\widehat{g}(\xi_3)=\frac{1}{\sqrt{2\pi}}\int_{\R^3}g(x_3)e^{-i\xi_3\cdot x_3}dx_3.
\end{equation}
We suppose
\be\label{g}
|\widehat{g}(\xi_3)|\geq \delta>0 \quad\mbox{for all}\quad \xi_3\in (-b,b),
\en
where $b>1$.
\begin{theorem}\label{TH4}
%Suppose that the boundary surface measurement data is positive in the sense that
%\be
%\epsilon=||u||_{H^2([0,T];H^\frac{3}{2}(\partial B_R))}\geq \epsilon_0>0,
%\en 
%where $\epsilon_0>0$ is given.
Let $\textbf{f}(\tilde{x},t)\in H^4([0,\infty); H^1(\R^2))^3$ be such that supp$\textbf{f}(\tilde{x},t)\subset \widetilde{B}_{R_0}\times(0,T_0)$ and let $T>2R+T_0$.
Assume that there exists $M>1$ such that $\|\textbf{f}\|_{H^4([0,\infty); H^1(\R^2))^3}\leq M$.
 Then there exist constants $\alpha\in(0,1)$ and $C>0$ depending on $n$, $\delta$, $M$, $T_0$ and $R$  such that
\begin{align}\label{es3}
\|\textbf{f}\|^2_{L^2(\R^3)^3} \leq C\Big( b^7\,\epsilon^2+b^5\,\,e^{2b(1-\alpha)}\epsilon^{2\alpha}+\frac{M^2}{b^{\frac{4}{3}}|\alpha\ln\epsilon|^{\frac{1}{2}}} \Big).
\end{align}
where $\epsilon=\Big(\int_0^T\int_{\partial B_R}\big(  |T(\textbf{E}\times\nu)|^2+|\textbf{E}\times\nu|^2 \big)ds(\textbf{x})dt\Big)^{\frac{1}{2}}$.
\end{theorem}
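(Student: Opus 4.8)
The plan is to adapt the Fourier-transform and analytic-continuation scheme behind Theorem \ref{TH1}, inserting an extra analytic continuation in the spatial frequency $\xi_3$ to cope with the factorized source $\textbf{f}(\tilde{x},t)g(x_3)$. First I would take the Fourier transform of \eqref{eq1}--\eqref{eq2} in the time variable $t$. Because $\textbf{F}$ is supported in $(0,T_0)$ and $T>2R+T_0$, Huygens' principle (finite speed of propagation at speed $1/\sqrt{n}$) guarantees that the field has completely passed $\partial B_R$ by time $T$, so the finite-time boundary record defining $\epsilon$ reproduces the full temporal Fourier transform of the Dirichlet data with no truncation error. The transformed field $\widehat{\textbf{E}}(\textbf{x},\omega)$ then solves the time-harmonic system $\nabla\times(\nabla\times\widehat{\textbf{E}})-n\omega^2\widehat{\textbf{E}}=(\mathcal{F}_t\textbf{f})(\tilde{x},\omega)\,g(x_3)$, placing us in the multi-frequency frequency-domain setting of \cite{BaoStability2020,Cheng2016Increasing}.

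Next I would establish the identity linking the boundary data to the Fourier transform of the source. Testing the frequency-domain equation against the vector plane waves $\textbf{p}\,e^{i\xi\cdot\textbf{x}}$ (which solve the homogeneous Maxwell system when $|\xi|^2=n\omega^2$ and $\textbf{p}\cdot\xi=0$) and applying Green's identity together with the boundary operator $T$, one obtains a bound of the form $|\widehat{\textbf{F}}(\xi,\omega)|\lesssim C(\omega)\,\epsilon$ for $\xi$ on the Ewald sphere $|\xi|=\sqrt{n}\,|\omega|$, with $C(\omega)$ polynomial in $\omega$. Writing $\xi=(\tilde{\xi},\xi_3)$ and using the factorization $\widehat{\textbf{F}}(\xi,\omega)=\widehat{\textbf{f}}(\tilde{\xi},\omega)\,\widehat{g}(\xi_3)$ (here $\widehat{\textbf{f}}$ is the full space--time transform entering Plancherel) with the lower bound \eqref{g}, I can divide by $\widehat{g}(\xi_3)$ whenever $|\xi_3|<b$ and recover $\widehat{\textbf{f}}(\tilde{\xi},\omega)$ controlled by $\delta^{-1}\epsilon$. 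The geometric hypothesis $R_0<R/\sqrt{2}$ is used here only to guarantee $\mathrm{supp}\,\textbf{F}\subset B_R$, so that the measurement on $\partial B_R$ is consistent.

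The core estimate then follows from Plancherel, $\|\textbf{f}\|_{L^2(\R^3)^3}^2=\int_{\R^3}|\widehat{\textbf{f}}(\tilde{\xi},\omega)|^2\,d\tilde{\xi}\,d\omega$, which I split over a ball $\{|\tilde{\xi}|^2+\omega^2<\rho^2\}$ and its complement. On the complement the a priori bound $\|\textbf{f}\|_{H^4([0,\infty);H^1(\R^2))^3}\leq M$ supplies a tail decaying as a power of $\rho$. On the ball the Ewald-sphere construction reaches only part of the needed region, and the missing values are recovered by analytic continuation: by Paley--Wiener, $\widehat{\textbf{f}}$ is entire with growth $e^{R_0|\mathrm{Im}\,\xi_3|}$, so I continue in $\xi_3$ from the measured arc to the full disk and invoke the sharp logarithmic-convexity bound of \cite{Cheng2016Increasing} to interpolate between the data bound and the growth bound $M\,e^{R_0 b}$. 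This yields $|\widehat{\textbf{f}}|\lesssim (M e^{R_0 b})^{1-\alpha}(\mathrm{poly}(b)\,\epsilon)^{\alpha}$; squaring and integrating over the ball produces the intermediate term $b^{5}e^{2b(1-\alpha)}\epsilon^{2\alpha}$, with the factor $M^{2(1-\alpha)}$ absorbed into the constant $C$ (which is why $C$ depends on $M$ here, unlike in Theorem \ref{TH1}). Finally, optimizing the splitting radius $\rho=\rho(b,\epsilon)$ balances the tail against the continuation error and converts the polynomial tail into the stated $M^2/(b^{4/3}|\alpha\ln\epsilon|^{1/2})$.

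The hard part will be the analytic-continuation step: obtaining the precise powers of $b$ and the interpolation exponent $\alpha\in(0,1)$ requires tracking carefully how the constraint $\tilde{\xi}^2+\xi_3^2=n\omega^2$ couples $\xi_3$ to $\omega$, quantifying the off-axis growth of $\widehat{\textbf{f}}$ with the exact Paley--Wiener constant $R_0$, and keeping all polynomial prefactors explicit (yielding $b^7$ for the Lipschitz part and $b^5$ for the continuation part) rather than absorbing them. It is exactly this bookkeeping of constants, together with the sharp bound from \cite{Cheng2016Increasing}, that upgrades a conditional stability estimate into the \emph{increasing} stability statement \eqref{es3}.
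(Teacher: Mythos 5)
Your overall architecture matches the paper's: test \eqref{eq1} against the plane waves $\textbf{p}e^{-i(\kappa_p\textbf{x}\cdot\textbf{d}+\omega t)}$, use Huygens' principle with $T>2R+T_0$ to kill the volume terms, divide by $\widehat{g}(\xi_3)$ using \eqref{g}, pass to Plancherel, split frequency space, and control the high-frequency tail by the a priori bound. The genuine gap is in the step that produces the exponent $\alpha$ and the term $b^5e^{2b(1-\alpha)}\epsilon^{2\alpha}$. After dividing by $\widehat{g}$, the data determine $\textbf{p}\cdot\widehat{\textbf{f}}(\tilde{\xi},\omega)$ only on the cone-interior region $\{|\tilde{\xi}|^2\le n\omega^2\}$ (those $(\tilde{\xi},\omega)$ for which a \emph{real} $\xi_3$ with $\xi_3^2=n\omega^2-|\tilde{\xi}|^2$ and $|\xi_3|<b$ exists); the region $\{|\tilde{\xi}|^2> n\omega^2\}$ inside the ball must be filled in by continuation \emph{in the three real variables} $(\xi_1,\xi_2,\omega)$. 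Your proposed mechanism --- Paley--Wiener continuation ``in $\xi_3$'' combined with the logarithmic-convexity bound of Cheng--Isakov--Lu --- does not accomplish this: $\widehat{\textbf{f}}$ is a function of $(\tilde{\xi},\omega)$ alone, so varying $\xi_3$ does not enlarge the set of $(\tilde{\xi},\omega)$ you control, and if you instead take $\xi_3$ purely imaginary to reach $|\tilde{\xi}|^2>n\omega^2$ you would need a lower bound on $|\widehat{g}|$ off the real axis, which \eqref{g} does not provide. Moreover, the Cheng--Isakov--Lu result (Lemma \ref{2}) is a one-variable sector estimate; in the paper it is used only for the radial continuation of $I(s)$ and $I_2(s)$ to $s\sim b^{2/3}|\ln\epsilon|^{1/4}$, which is what generates the $M^2/(b^{4/3}|\alpha\ln\epsilon|^{1/2})$ tail. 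The ingredient you are missing is Lemma \ref{lem}, a multidimensional H\"older stability estimate for analytic continuation from an open subset $\mathcal{O}$ (Vessella, Bellassoued--Ben~A\"{\i}cha), applied to the rescaled function $F_s(\xi)=\textbf{p}\cdot\widehat{\textbf{f}}(s\xi)$ with derivative bounds $|\partial^\gamma F_s|\le CM\eta^{-|\gamma|}|\gamma|!\,e^s$; the exponent $\alpha$ is the H\"older exponent of that lemma and the factor $e^{s(1-\alpha)}$ comes from the $M_0=CMe^s$ in its hypothesis. Without this (or an equivalent several-variable continuation estimate) the middle term of \eqref{es3} cannot be produced.

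Two smaller omissions: you never explain how to recover $|\widehat{\textbf{f}}|^2$ from the two polarization components --- the plane-wave identity only controls $\textbf{p}\cdot\widehat{\textbf{f}}$ and $\textbf{q}\cdot\widehat{\textbf{f}}$, and the paper closes this via the Pythagorean decomposition \eqref{pqf} (together with the divergence-free normalization); and the final balancing is not a free optimization over a radius $\rho$ but the specific two-case choice of $s$ dictated by Lemma \ref{2}, performed separately for $I(s)+I_1(s)$ (with $|\ln\epsilon|$) and for $I_2(s)+I_3(s)$ (with $|\alpha\ln\epsilon|$), which is where the distinct arguments $|\ln\epsilon|$ and $|\alpha\ln\epsilon|$ in \eqref{es3} originate.
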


\section{Preliminaries and Proof of Theorem \ref{TH1}}\label{pre}
In this section. The electric current density is assumed to take the form
\[\textbf{F}(\textbf{x},t)=\textbf{f}(\textbf{x})g(t)\]
where $g$ is given.
Let $\textbf{E}^{inc}$ and $\textbf{H}^{inc}$ be the electric and magnetic plane waves. Explicitly, we have 
\begin{equation}\nonumber
\textbf{E}^{inc}=\textbf{p}e^{-i(\kappa_p \textbf{x}\cdot \textbf{d}+\omega t)}, \quad  \textbf{H}^{inc}=\textbf{q}e^{-i(\kappa_p \textbf{x}\cdot\textbf{d}+\omega t)}
\end{equation}
where $\textbf{d}\in\Sp^2$ is a unit vector and $\textbf{p}$, $\textbf{q}$ are two unit polarization vectors satisfying $\textbf{p}\cdot\textbf{d}=0$, $\textbf{q}=\textbf{d}\times\textbf{p}$.

If $\kappa_p^2-n\omega^2=0$, it is easy to verify that $\textbf{E}^{inc}$ and $\textbf{H}^{inc}$ satisfy the homogeneous Maxwell equations in $\R^3$:
\begin{equation}\nonumber
n\partial_t^2\textbf{E}(\textbf{x},t)+\nabla\times(\nabla\times \textbf{E}(\textbf{x},t))=0 
\end{equation}
and 
\begin{equation}\nonumber
n\partial_t^2\textbf{H}(\textbf{x},t)+\nabla\times(\nabla\times \textbf{H}(\textbf{x},t))=0.
\end{equation}
Multiplying the both sides of (\ref{eq1}) by $\textbf{E}^{inc}$, we obtain
\begin{equation}\label{eq2.12}
  \int_0^T\int_{B_R} (n\partial_t^2\textbf{E}+\nabla\times(\nabla\times\textbf{E})\textbf{E}^{inc}\ \mathrm{d}\textbf{x}\mathrm{d}t=\int_0^T\int_{B_R} \textbf{f}(\textbf{x}) g(t) \textbf{E}^{inc}(\textbf{x},t)\ \mathrm{d}\textbf{x}\mathrm{d}t.
\end{equation}
Integrating by parts, one deduces from the left hand side of (\ref{eq2.12}) that
\begin{align*}
  \int_0^T\int_{B_R}&(n\partial_t^2\textbf{E}+\nabla\times(\nabla\times\textbf{E})\textbf{E}^{inc}\ \mathrm{d}\textbf{x}\mathrm{d}t \\
  =&\int_{B_R} n(\partial_{t}\textbf{E}(\textbf{x},t) \textbf{E}^{inc}(\textbf{x},t)- \textbf{E}(\textbf{x},t) \partial_{t}\textbf{E}^{inc}(\textbf{x},t))\big|_0^T\;\mathrm{d}\textbf{x}\\
  &\quad+\int_0^T\int_{\partial B_R} \big(\nu\times(\nabla\times \textbf{E})\cdot\textbf{E}^{inc}-\nu\times(\nabla\times \textbf{E}^{inc})\cdot \textbf{E}\big) ds(\textbf{x})dt.\\
=&-\int_0^T\int_{\partial B_R} \big((T(\textbf{E}\times\nu)\cdot \textbf{E}^{inc})+(\textbf{E}\times\nu)\cdot(\nabla\times \textbf{E}^{inc})\big)ds(\textbf{x})dt.
\end{align*}
Note that, in the last step we have used the fact that $\textbf{E}(\textbf{x},t)=0$ when $|\textbf{x}|<R$ and $t>T_0+2R$, which follows straightforwardly from Huygens' principle (see e.g.,
 \cite[Lemma 2.1]{BaoHu}). This implies $\textbf{E}(\textbf{x},T)=\partial_t \textbf{E}(\textbf{x},T)=0$ for $\textbf{x}\in B_R$ and $T>T_0+2R$. Hence, the integral over $B_R$ on the left hand side of the previous identity vanishes. 
\begin{equation}\label{pf}
\int_{B_R}\textbf{p}e^{-i\kappa_p\textbf{x}\cdot\textbf{d}}\textbf{f}(\textbf{x})d\textbf{x}\int_0^Tg(t)e^{-i\omega t}dt=
-\int_0^T\int_{\partial B_R}\big((T(\textbf{E}\times\nu)\cdot \textbf{E}^{inc})+(\textbf{E}\times\nu)\cdot(\nabla\times \textbf{E}^{inc})\big)ds(x)dt
\end{equation}
By the definition of $\textbf{E}^{inc}$, one can check that
\begin{equation}\nonumber
\nabla\times\textbf{E}^{inc}=-i\kappa_p\textbf{d}\times \textbf{p}e^{-i(\kappa_p\textbf{x}\cdot\textbf{d}+\omega t)}
\end{equation}
which gives
\[|\nabla\times\textbf{E}^{inc}|=\sqrt{n}\omega.\]
Using the assumption about $g$ and the fact that $\textbf{f}$ is supported in $B_R$, we derive 
from (\ref{pf}) together with the previous two relations that
\ben
|\textbf{p}\cdot\widehat{\textbf{f}}(\kappa_p\textbf{d})\widehat{g}(\omega)|
&=&\Big|(2\pi)^{-2}\int_0^T\int_{\mathbb{R}^3} \textbf{p}\cdot\textbf{f}(\textbf{x})g(t)  e^{-i\kappa_p \textbf{x}\cdot\textbf{d}-i\omega t}\ \mathrm{d}\textbf{x}\mathrm{d}t\Big|\\
 &=&\Big|(2\pi)^{-2}\int_0^T\int_{B_R} \textbf{p}\cdot\textbf{f}(\textbf{x})g(t) \textbf{E}^{inc}(\textbf{x},t)\ \mathrm{d}\textbf{x}\mathrm{d}t\Big|\\
&\leq &C(1+\omega)\epsilon,
\enn
where $\epsilon=\Big(\int_0^T\int_{\partial B_R}\big(  |T(\textbf{E}\times\nu)|^2+|\textbf{E}\times\nu|^2 \big)ds(\textbf{x})dt\Big)^{\frac{1}{2}}$ represents the measurement data on $\partial B_R\times (0, T)$.
In view of the assumption \eqref{af}, one obtains for $\omega\in(0, b)$, $b>1$ and  $\kappa_p^2=n\omega^2$ that
\begin{equation}\label{phatf}
\begin{split}
 |\textbf{p}\cdot\widehat{\textbf{f}}(\kappa_p\textbf{d})|\leq C\frac{\omega\epsilon}{|\hat{g}(\omega)|}&\leq C\,\delta^{-1}(1+\omega)\epsilon\\
&\leq Cb\epsilon,
\end{split}
\end{equation}
where $C>0$ depends on $n$, $\delta$, $T_0$ and $R$.

Repeating similar steps, we get  
\be
|\textbf{q}\cdot\hat{\textbf{f}}(\kappa_p\textbf{d})|\leq C\,b\,\epsilon \quad \mbox{for all} \  |\kappa_p|<\sqrt{n}b.
\en
On the other hand, Since $\nabla\cdot\textbf{f}=0$, we obtain that 
\begin{equation}
-i\kappa_p\textbf{d}\cdot\widehat{\textbf{f}}(-\kappa_p\textbf{d})=-i\kappa_p\int_{B_R}\textbf{d}e^{-i\kappa_p\textbf{x}\cdot\textbf{d}}\cdot\textbf{f}(\textbf{x})d\textbf{x}=\int_{B_R}\nabla e^{-i\kappa_p\textbf{x}\cdot\textbf{d}}\cdot\textbf{f}(\textbf{x})d\textbf{x}=\int_{B_R}e^{-i\kappa_p\textbf{x}\cdot\textbf{d}}\nabla\cdot \textbf{f}(\textbf{x})d\textbf{x}=0,
\end{equation}
which means $\textbf{d}\cdot\widehat{\textbf{f}}(-\kappa_p\textbf{d})=0$.
Using the Pythagorean theorem yields 
\begin{equation}
|\widehat{\textbf{f}}(\kappa_p\textbf{d})|^2=|\textbf{p}\cdot\widehat{\textbf{f}}(\kappa_p\textbf{d})|^2+|\textbf{q}\cdot\widehat{\textbf{f}}(\kappa_p\textbf{d})|^2+|\textbf{d}\cdot\widehat{\textbf{f}}(\kappa_p\textbf{d})|^2
=|\textbf{p}\cdot\widehat{\textbf{f}}(\kappa_p\textbf{d})|^2+|\textbf{q}\cdot\widehat{\textbf{f}}(\kappa_p\textbf{d})|^2.
\end{equation}
Let $\xi_p=\kappa_p\textbf{d}$, we obtain from the Parseval theorem that  
\begin{equation}\label{par1}
\|\textbf{f}\|_{L^2(\R^3)^3}^2=\|\widehat{\textbf{f}}\|_{L^2(\R^3)^3}^2=\int_{\R^3}|\widehat{\textbf{f}}(\xi_p)|^2d\xi_p
=\int_{\R^3}|\textbf{p}\cdot\widehat{\textbf{f}}(\xi_p)|^2d\xi_p+\int_{\R^3}|\textbf{q}\cdot\widehat{\textbf{f}}(\xi_p)|^2d\xi_p.
\end{equation}
Denote
\begin{equation}\nonumber
I(s)=\int_{|\xi_p|\leq \sqrt{n}s}|\textbf{p}\cdot\widehat{\textbf{f}}(\xi_p)|^2d\xi_p.
\end{equation}
Since the integrand $I(s)$ is an entire analytic function of $\xi$, the integral $I(s)$ with respect to $\xi_p$ can be taken over by any path joining points $0$ and $\sqrt{n}s$ in complex plane. Thus $I(s)$ is an entire analytic function of $s=s_1+is_2$ $(s_1,s_2\in \mathbb{R})$ and the following estimate holds.
\begin{lemma}\label{1}
Let $\textbf{f}(\textbf{x})\in L^2(\mathbb{R}^3)^3$, $\mathrm{supp}\ \textbf{f}\subset B_R $. Then
\begin{equation}\label{esI}
| I(s)|\leq (\frac{4\pi}{3})^2\,R^3\,\sqrt{n}^3|s|^3e^{2R\sqrt{n}|s_2|}||\textbf{f}||^2_{L^2(\mathbb{R}^3)^3},\qquad s=s_1+i s_2\in \C.
 \end{equation}
\end{lemma}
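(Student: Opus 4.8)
The plan is to realize $I(s)$ as the analytic continuation of a spherical-coordinate integral and then bound it by a Paley--Wiener (exponential-type) estimate on $\wi{\textbf f}$. First I would rewrite $I(s)$ in polar form: writing $\xi_p=\ka\textbf d$ with $\textbf d\in\Sp^2$ and radial variable $\ka$, one has for real $s>0$ that $I(s)=\int_{\Sp^2}\int_0^{\sqrt n s}|\textbf p\cdot\wi{\textbf f}(\ka\textbf d)|^2\,\ka^2\,d\ka\,dS(\textbf d)$. As written the integrand $|\textbf p\cdot\wi{\textbf f}(\ka\textbf d)|^2$ is not holomorphic in $\ka$, so the crucial observation is that, because $\textbf f$ is real-valued (hence $\ov{\wi{\textbf f}(\xi)}=\wi{\textbf f}(-\xi)$ on the real axis) and $\textbf p$ is real, for real $\ka$ one has $|\textbf p\cdot\wi{\textbf f}(\ka\textbf d)|^2=(\textbf p\cdot\wi{\textbf f}(\ka\textbf d))(\textbf p\cdot\wi{\textbf f}(-\ka\textbf d))$. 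Each factor is the Fourier integral of the compactly supported $\textbf f$, hence an entire function of $\ka$, so their product is entire; this furnishes the analytic continuation of $I(s)$ to $s=s_1+is_2\in\C$ asserted before the lemma, with the $\ka$-integral taken along any path from $0$ to $\sqrt n s$.

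Next I would estimate the two factors along the complex ray $\ka=\sqrt n s\,t$, $t\in[0,1]$. Since $\mathrm{supp}\,\textbf f\subset B_R$ and $|\textbf d\cdot\textbf x|\le|\textbf x|\le R$, the elementary inequality $|e^{-i\ka\textbf d\cdot\textbf x}|=e^{\I(\ka)\,\textbf d\cdot\textbf x}\le e^{|\I(\ka)|R}$, combined with $|\textbf p|=1$ and Cauchy--Schwarz, gives $|\textbf p\cdot\wi{\textbf f}(\pm\ka\textbf d)|\le(\tfrac43\pi R^3)^{1/2}e^{|\I(\ka)|R}\,\|\textbf f\|_{L^2(\R^3)^3}$ (up to the fixed Fourier-normalization constant), where $\tfrac43\pi R^3=|B_R|$. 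On the ray $\I(\ka)=\sqrt n\,t\,s_2$, so $|\I(\ka)|\le\sqrt n|s_2|$ for $t\in[0,1]$; multiplying the two factor bounds produces exactly the exponential factor $e^{2R\sqrt n|s_2|}$ appearing in \eqref{esI}, which is the whole source of the $s_2$-dependence.

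Finally I would assemble the pieces. Along the ray, $d\ka=\sqrt n\,s\,dt$ and $|\ka|^2=n|s|^2t^2$, so multiplying the two factor estimates by the weight $\ka^2$ and $|d\ka|$, then integrating $\int_0^1 t^2\,dt=\tfrac13$ over the radius and $\int_{\Sp^2}dS(\textbf d)=4\pi$ over the sphere, collects the constant $|B_R|\cdot|\Sp^2|\cdot\tfrac13=\tfrac43\pi R^3\cdot 4\pi\cdot\tfrac13=(\tfrac{4\pi}{3})^2R^3$ together with the factor $\sqrt n^{\,3}|s|^3\,e^{2R\sqrt n|s_2|}\|\textbf f\|^2_{L^2(\R^3)^3}$, which is precisely \eqref{esI} (any other Fourier-normalization convention only makes the constant smaller). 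The single genuinely delicate step is the first one: correctly identifying the holomorphic continuation of $I(s)$ through the reality relation $\ov{\wi{\textbf f}(\xi)}=\wi{\textbf f}(-\xi)$, which turns the modulus-squared integrand into an honest entire product; once that is secured, the exponential-type bound and the radial and angular integrations are entirely routine.
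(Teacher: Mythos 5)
Your proof is correct and follows essentially the same route as the paper's: polar coordinates, the rescaling $\kappa=\sqrt{n}\,s\,t$, the pointwise bound $|e^{-i\kappa\textbf{d}\cdot\textbf{x}}|\le e^{|\Im\kappa|R}$ on $B_R$ combined with Cauchy--Schwarz, and the radial and angular integrations collecting the constant $(\tfrac{4\pi}{3})^2R^3\sqrt{n}^3|s|^3e^{2R\sqrt{n}|s_2|}$. Your handling of the analytic continuation --- replacing $|\textbf{p}\cdot\widehat{\textbf{f}}(\xi)|^2$ by the entire product $(\textbf{p}\cdot\widehat{\textbf{f}}(\xi))(\textbf{p}\cdot\widehat{\textbf{f}}(-\xi))$ via the reality of $\textbf{f}$ --- is in fact more careful than the paper's, which simply evaluates the modulus squared at complex argument, but both yield the identical final estimate.
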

\begin{proof}
Set $l=\sqrt{n}sl'$ for $l'\in (0,1)$. Then it is easy to get
\begin{align*}
I(s)&\leq\int_{|\xi_p|\leq \sqrt{n}s}|\widehat{\textbf{f}}|^2(\xi_p)d\xi_p\\
&=\int_{0}^{\sqrt{n}s}\int_{\mathbb{S}^2} |\widehat{\textbf{f}}(l\hat{\theta})|^2 l^2 \mathrm{d}\hat{\theta}\mathrm{d} l\\
&=\int_{0}^{1}\int_{\mathbb{S}^2}|\widehat{\textbf{f}}(c_nsl'\hat{\theta})|^2 (\sqrt{n}s)^3l'^2 \mathrm{d}\hat{\theta}\mathrm{d}l'.
\end{align*}
Noting the elementary inequality $|e^{-i\sqrt{n}sl'\hat{\theta}\cdot\textbf{x}}|\leq e^{\sqrt{n}|s_2|R}$ for all $x\in B_R$ and $\hat{\theta}\in \mathbb{S}^2$, we have
\begin{align*}
|I(s)|&\leq\left|\int_{0}^{1}\int_{\mathbb{S}^2} \left|\int_{B_R}\textbf{f}(\textbf{x})e^{-i\sqrt{n}sl'\hat{\theta}\cdot \textbf{x}}\mathrm{d}\textbf{x}\right|^2 (c_ns)^3l'^2 \mathrm{d}\hat{\theta}\mathrm{d}l'\right|\\
&\leq \frac{4\pi}{3}R^3\int_{0}^{1}\int_{\mathbb{S}^2}\sqrt{n}^3|s|^3 \left(\int_{B_R}|\textbf{f}(\textbf{x})|^2|e^{2\sqrt{n}R|s_2|}|\, \mathrm{d}\textbf{x}\right)\mathrm{d}\hat{\theta}\mathrm{d}l'\\
&\leq  (\frac{4\pi}{3})^2\,R^3\,\sqrt{n}^3\,|s|^3e^{2R\sqrt{n}|s_2|}||\textbf{f}||^2_{L^2(\mathbb{R}^3)^3}.
\end{align*}
This completes the Lemma \ref{1}.
\end{proof}

The following Lemma is essential to show the relation between $I(s)$ for $s\in(b,\infty)$ with $I(b).$ Its proof can be found in \cite{Cheng2016Increasing}.
\begin{lemma}\label{2}
  Let $J(z)$ be an analytic function in $S=\{z=x+iy\in \mathbb{C}:-\frac{\pi}{4}<\arg z<\frac{\pi}{4}\}$ and continuous in $\overline{S}$ satisfying
  \begin{equation}
  \nonumber
  \begin{cases}
    |J(z)| \leq \epsilon, \  & z\in (0,L], \\
    |J(z)| \leq V, \  & z\in S,\\
    |J(0)|  =0.
  \end{cases}
  \end{equation}
Then there exists a function $\mu(z)$ satisfying
\begin{equation}
  \nonumber
  \begin{cases}
   \mu (z)  \geq\frac{1}{2},\ \ &z\in (L,2^{\frac{1}{4}}L), \\
   \mu (z)  \geq\frac{1}{\pi}((\frac{z}{L})^4-1)^{-\frac{1}{2}},\ \ & z\in (2^{\frac{1}{4}}L,\infty)
  \end{cases}
\end{equation}
   such that
\begin{equation}
\nonumber
 |J(z)|\leq V\epsilon^{\mu(z)}, \ \ \forall z\in (L,\infty).
\end{equation}
\end{lemma}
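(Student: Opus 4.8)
The plan is to treat the segment $(0,L]$ as a two-sided slit and to apply a two-constants (Hadamard-type) theorem on the resulting slit sector, reducing the whole statement to a harmonic-measure estimate. Concretely, I would set $D=S\setminus(0,L]$. Since $(0,L]$ lies in the interior of $S$ and $J$ is analytic across it, $J$ is continuous up to both banks of the slit; in $D$ the slit now belongs to $\partial D$, while the ray $(L,\infty)$ has become an interior set of $D$. On $\partial D$ we have $|J|\le\epsilon$ on the slit (from both sides, using continuity of $J$) and $|J|\le V$ on the two rays $\arg z=\pm\pi/4$.

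Next I would apply the maximum principle to the subharmonic function $u=\log|J|$ on $D$ (the zeros of $J$ only send $u$ to $-\infty$ and help the upper bound). Writing $\mu(z)$ for the harmonic measure of the slit $(0,L]$ with respect to $D$ evaluated at $z$, the two-constants theorem gives
\[\log|J(z)|\le \mu(z)\log\epsilon+(1-\mu(z))\log V,\qquad z\in D,\]
hence $|J(z)|\le V^{1-\mu(z)}\epsilon^{\mu(z)}\le V\epsilon^{\mu(z)}$, where the last inequality uses $V\ge1$ as in the application. This is precisely the asserted bound, so it remains only to estimate $\mu$ from below on $(L,\infty)$.

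The harmonic measure I would compute by conformal mapping, exploiting its conformal invariance. The chain $z\mapsto z^2\mapsto iz^2\mapsto \sqrt{L^4-z^4}$ maps $D$ onto the upper half-plane: $z\mapsto z^2$ sends $S$ to the right half-plane $\{\mathrm{Re}>0\}$ and the slit to $(0,L^2]$; multiplication by $i$ sends this to the upper half-plane carrying a vertical boundary slit $(0,iL^2]$; and $\zeta\mapsto\sqrt{\zeta^2+L^4}$ opens that slit onto the segment $[-L^2,L^2]$ of the real axis. A real test point $z=x>L$ is carried to $i\sqrt{x^4-L^4}$ on the imaginary axis, and the harmonic measure of a real boundary segment in the upper half-plane is the normalized subtended angle. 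This yields the explicit value
\[\mu(x)=\frac{2}{\pi}\arctan\frac{1}{\sqrt{(x/L)^4-1}}.\]
For $x\in(L,2^{1/4}L]$ the argument of $\arctan$ is $\ge1$, whence $\mu(x)\ge\tfrac12$; for $x>2^{1/4}L$ the argument lies in $(0,1)$, and the concavity bound $\arctan t\ge\tfrac{\pi}{4}t$ on $[0,1]$ gives $\mu(x)\ge\tfrac12((x/L)^4-1)^{-1/2}\ge\tfrac1\pi((x/L)^4-1)^{-1/2}$, matching the two claimed lower bounds.

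The main obstacle is the rigorous justification of the two-constants step on the \emph{unbounded} domain $D$: the comparison between $u$ and the harmonic majorant $\mu\log\epsilon+(1-\mu)\log V$ must be controlled at infinity rather than merely on the finite boundary. Since $|J|\le V$ holds throughout all of $S$ (a global bound, not just a boundary bound), I would exhaust $D$ by the bounded domains $D\cap\{|z|<n\}$, verify that the harmonic measure of the outer arc tends to $0$ and that the finite-domain harmonic measures converge to $\mu$, and check the boundary behaviour at the slit tip $z=L$ and at the corner $z=0$ (where $J(0)=0$ is harmless, since there $u\to-\infty$). Besides this, the only point demanding care is fixing the correct branch of $\sqrt{L^4-z^4}$ and confirming the boundary correspondences along the conformal chain.
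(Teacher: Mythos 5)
Your argument is correct: the two\-/constants theorem on the slit sector $S\setminus(0,L]$, the conformal chain $z\mapsto z^2\mapsto iz^2\mapsto\sqrt{\,\cdot^2+L^4}$ giving $\mu(x)=\tfrac{2}{\pi}\arctan\big((x/L)^4-1\big)^{-1/2}$, and the elementary bounds $\arctan t\geq \tfrac{\pi}{4}t$ on $[0,1]$ reproduce exactly the two stated lower bounds (indeed with the better constant $\tfrac12$ in place of $\tfrac1\pi$), and your exhaustion remark correctly handles the unboundedness since $|J|\leq V$ holds throughout $S$. The paper itself gives no proof of this lemma and simply cites Cheng--Isakov--Lu \cite{Cheng2016Increasing}, whose argument is precisely this harmonic\-/measure computation, so your proposal coincides with the intended proof.
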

Let the sector $S\subset\C$ be given as in Lemma  \ref{2}.
Now, it follows from Lemma \ref{1} that
\begin{equation}
\nonumber
|I(s)e^{-(2R\sqrt{n}+1)s}|\leq CM^2 \quad \mbox{for all}\quad s\in S,
\end{equation}
where $C>0$ depends on $n$ and $R$.
Recalling from a priori estimate \eqref{phatf}, we obtain 
\begin{equation}
\nonumber
I(s)=\int_{|\xi_p|\leq \sqrt{n}s}|\textbf{p}\cdot\widehat{\textbf{f}}(\xi_p)|^2d\xi_p\leq C\,s^3\,b^2\,\epsilon^2, \quad s\in(0,b]
\end{equation}
where $C>0$ depends on $n$, $T_0$ and $R$.
Hence
\begin{equation}
\nonumber
| I(s)e^{-(2R\sqrt{n}+1)s}|\leq C b^2\,\epsilon^2, \ \ s\in(0,b].
\end{equation}
Then applying Lemma \ref{2} with $L=b$ to the function $J(s):=\frac{1}{b^2}I(s)e^{-(2R\sqrt{n}+1)s}$ , we know that there exists a function $\mu(s)$ satisfying
  \begin{equation}\nonumber
  \begin{cases}
  \mu (s)  \geq\frac{1}{2},\ \ &s\in (b,2^{\frac{1}{4}}b), \\
   \mu (s)  \geq\frac{1}{\pi}((\frac{s}{b})^{4}-1)^{-\frac{1}{2}},\ \ &s\in (2^{\frac{1}{4}}b,\infty)
  \end{cases}
\end{equation}
such that
  \begin{equation}
   \nonumber
   |\frac{1}{b^2}I(s)e^{-(2R\sqrt{n}+1)s}|\leq CM^2\epsilon^{2\mu} \ \ \quad\mbox{for all}\quad s\in(b,\infty),
   \end{equation}
where $C>0$ depends on $n$, $T_0$ and $R$.

Now we show the proof of Theorem \ref{TH1}. We assume that $\epsilon<e^{-1}$, otherwise the estimate is obvious. 
Let
\begin{equation}
\nonumber
s= \begin{cases}
  \frac{1}{((2R\sqrt{n}+3)\pi)^{\frac{1}{3}}}b^{\frac{2}{3}}|\ln \epsilon|^{\frac{1}{4}},  \qquad&\mbox{if} \quad 2^{\frac{1}{4}} ((2R\sqrt{n}+3)\pi)^{\frac{1}{3}}b^{\frac{1}{3}}< |\ln \epsilon|^{\frac{1}{4}},\\
     b,  \qquad&\mbox{if} \quad |\ln \epsilon|^{\frac{1}{4}}\leq 2^{\frac{1}{4}}((2R\sqrt{n}+3)\pi)^{\frac{1}{3}} b^{\frac{1}{3}}.
  \end{cases}
\end{equation}
Case (i):  $2^{\frac{1}{4}}((2R\sqrt{n}+3)\pi)^{\frac{1}{3}} b^{\frac{1}{3}}< |\ln \epsilon|^{\frac{1}{4}}$. Then we have
\begin{eqnarray*}
 %|I(k)|&\leq& CM^2\epsilon^{2\mu}e^{(2R+1)k} \\
 |I(s)|&\leq& CM^2b^2\epsilon^{2\mu}e^{(2R\sqrt{n}+1)s}\\
 &=&CM^2b^2e^{(2R\sqrt{n}+1)s-2\mu(s)|\ln\epsilon|}\\
           &\leq& CM^2b^2e^{\frac{(2R\sqrt{n}+3)}{((2R+3)\pi)^{\frac{1}{3}}}b^{\frac{2}{3}}|\ln \epsilon|^{\frac{1}{4}}-\frac{2|\ln \epsilon|}{\pi}(\frac{b}{s})^2 }\\
           &=& CM^2b^2 e^{-2\big(\frac{(2R\sqrt{n}+3)^2}{\pi}\big)^{\frac{1}{3}}b^{\frac{2}{3}}|\ln \epsilon|^{\frac{1}{2}}(1- \frac{1}{2}|\ln \epsilon|^{-\frac{1}{4}}) }, \\
\end{eqnarray*}
where $C>0$ depends on $n$, $T_0$ and $R$.
Noting that $|\ln\epsilon|^{-\frac{1}{4}}<1$ and $\big(\frac{(2R\sqrt{n}+3)^2}{\pi}\big)^{\frac{1}{3}}>1$, we obtain
\[|I(s)|\leq CM^2b^2e^{-b^{\frac{2}{3}}|\ln \epsilon|^{\frac{1}{2}}}.\]
Using the inequality $e^{-t}\leq\frac{6!}{t^6}$ for $t>0$, we get
\begin{equation}\label{ess}
  |I(s)|\leq C\frac{M^2}{b^2|\ln \epsilon|^{3}}.
\end{equation}
Since $b^2|\ln \epsilon|^{3}\geq b^{\frac{4}{3}}|\ln \epsilon|^{\frac{1}{2}}$ when $b>1$ and $|\ln\epsilon|>1$.
Hence
\begin{equation}\label{ess1}
  \begin{split}
  I(s)+\int_{|\xi_p|>\sqrt{n}s} |\textbf{p}\cdot\widehat{\textbf{f}}(\xi_p)|^2\,\mathrm{d}\xi_p&\leq I(s)+\frac{M^2}{n^2s^2}\\
                         &\leq C(\frac{ M^2}{b^4|\ln \epsilon|^{3}}+\frac{ M^2}{b^{\frac{4}{3}}|\ln \epsilon|^{\frac{1}{2}}})\\
                         &\leq \frac{ CM^2}{b^{\frac{4}{3}}|\ln \epsilon|^{\frac{1}{2}}},
  \end{split}
\end{equation}
where $C>0$ depends on $n$, $T_0$ and $R$.

Case (ii):  $|\ln \epsilon|^{\frac{1}{4}}\leq 2^{\frac{1}{4}}((2R\sqrt{n}+3)\pi)^{\frac{1}{3}} b^{\frac{1}{3}}$. In this case we have $s=b$ by the choice of $s$ and $|I(b)|\leq C\,b^5\epsilon^2$. Hence,
 \begin{equation}\label{ess2}
  \begin{split}
     I(b)+\int_{|\xi_p|>\sqrt{n}b} |\textbf{p}\cdot\widehat{\textbf{f}}(\xi_p)|^2\,\mathrm{d}\xi_p\leq C(b^5\epsilon^2+\frac{ M^2}{b^2})\\
 \leq C(b^5\epsilon^2+\frac{M^2}{b^{\frac{4}{3}}|\ln \epsilon|^{\frac{1}{2}}}),
  \end{split}
 \end{equation}
 where $C>0$ depends on $n$, $\delta$, $T_0$ and $R$.
Combining (\ref{ess1}) and (\ref{ess2}), we finally obtain
\begin{equation*}
\int_{\R^3} |\textbf{p}\cdot\widehat{\textbf{f}}(\xi_p)|^2d\xi_p  \leq  C(b^5\epsilon^2+\frac{M^2}{b^{\frac{4}{3}}|\ln \epsilon|^{\frac{1}{2}}}).
\end{equation*}
Repeating similar steps by multiplying $\textbf{H}^{inc}$ on both sides of the equation (\ref{eq1}), we get 
\begin{equation*}
\int_{\R^3} |\textbf{q}\cdot\widehat{\textbf{f}}(\xi_p)|^2d\xi_p \leq  C(b^5\epsilon^2+\frac{M^2}{b^{\frac{4}{3}}|\ln \epsilon|^{\frac{1}{2}}}),
\end{equation*}
where $C>0$ depends on $n$, $\delta$, $T_0$ and $R$.
Combining the above estimates and (\ref{par1}), 
we complete the proof.

\section{Proof of Theorem \ref{TH3}}\label{th3}
In this section. Let $n\in(0, b^2)$ for $b>1$. Through this section, $C>0$ denotes a generic constant which is independent of $n$, may vary from line to line. Consider the equation
\begin{equation}\label{F}
\begin{cases}
n\partial_t^2\textbf{E}(\textbf{x},t)+\nabla\times(\nabla\times \textbf{E}(\textbf{x},t))=\textbf{F}(\textbf{x},t), \ &(\textbf{x},t)\in\R^3\times(0,\infty),\\
\textbf{E}(\textbf{x},0)=0, \quad \partial_t\textbf{E}(\textbf{x},0)=0, \ &\textbf{x}\in\R^3.
\end{cases}
\end{equation}
Assume that $\textbf{E}(\textbf{x},t):=\textbf{E}(\textbf{x},t;n)$.

Multiplying the both sides of (\ref{F}) by $\textbf{E}^{inc}$ and using the integration by parts over $B_R$, we obtain
\begin{equation}\label{einc}
  \int_0^T\int_{B_R} (n\partial_t^2\textbf{E}+\nabla\times(\nabla\times\textbf{E})\textbf{E}^{inc}\ \mathrm{d}\textbf{x}\mathrm{d}t=\int_0^T\int_{B_R} \textbf{F}(\textbf{x},t) \textbf{E}^{inc}(\textbf{x},t)\ \mathrm{d}\textbf{x}\mathrm{d}t.
\end{equation}
Integrating by parts, one deduces from the left hand side of (\ref{einc}) that
\begin{align*}
  \int_0^T\int_{B_R}&(n\partial_t^2\textbf{E}+\nabla\times(\nabla\times\textbf{E})\textbf{E}^{inc}\ \mathrm{d}\textbf{x}\mathrm{d}t \\
  =&\int_{B_R} n(\partial_{t}\textbf{E}(\textbf{x},t) \textbf{E}^{inc}(\textbf{x},t)- \textbf{E}(\textbf{x},t) \partial_{t}\textbf{E}^{inc}(\textbf{x},t))\big|_0^T\;\mathrm{d}\textbf{x}\\
  &\quad+\int_0^T\int_{\partial B_R} \big(\nu\times(\nabla\times \textbf{E})\cdot\textbf{E}^{inc}-\nu\times(\nabla\times \textbf{E}^{inc})\cdot \textbf{E}\big) ds(\textbf{x})dt.\\
=&-\int_0^T\int_{\partial B_R} \big((T(\textbf{E}\times\nu)\cdot \textbf{E}^{inc})+(\textbf{E}\times\nu)\cdot(\nabla\times \textbf{E}^{inc})\big)ds(\textbf{x})dt.
\end{align*}
Note that, in the last step we have used the fact that $\textbf{E}(\textbf{x},t)=0$ when $|\textbf{x}|<R$ and $t>T_0+2R$, which follows straightforwardly from Huygens' principle (see e.g.,
 \cite[Lemma 2.1]{BaoHu}). This implies $\textbf{E}(\textbf{x},T)=\partial_t \textbf{E}(\textbf{x},T)=0$ for $\textbf{x}\in B_R$ and $T>T_0+2R$. Hence, the integral over $B_R$ on the left hand side of the previous identity vanishes. Thus
\begin{equation}
\int_{B_R}\int_0^T\textbf{p}e^{-i(\xi_p\cdot\textbf{x}+\omega t)}\cdot\textbf{F}(\textbf{x},t)d\textbf{x}dt=
-\int_0^T\int_{\partial B_R}\big((T(\textbf{E}\times\nu)\cdot \textbf{E}^{inc})+(\textbf{E}\times\nu)\cdot(\nabla\times \textbf{E}^{inc})\big)ds(\textbf{x})dt.
\end{equation}
Define
\begin{equation}
\nonumber
\widehat{\textbf{F}}(\xi,\omega)=(2\pi)^{-2}\int_{\R^4}\textbf{F}(\textbf{x},t)e^{-i(\xi\cdot \textbf{x}+\omega t)}d\textbf{x}dt,
\end{equation}
with $(\xi,\omega)=(\xi_1,\xi_2,\xi_3,\omega)\in\R^4$.
Since supp $\textbf{F}(\textbf{x},t)$ $\subset B_R\times(0,T_0)$, we have
\begin{equation}\label{pF}
\begin{split}
(2\pi)^2\textbf{p}\cdot\widehat{\textbf{F}}(\xi_p,\omega)
&=\int_0^T\int_{B_R}\textbf{p}\cdot\textbf{F}(\textbf{x},t)e^{-i(\xi_p\cdot \textbf{x}+\omega t)}d\textbf{x}dt \\
& = -\int_0^T\int_{\partial B_R}\big((T(\textbf{E}\times\nu)\cdot \textbf{E}^{inc})+(\textbf{E}\times\nu)\cdot(\nabla\times \textbf{E}^{inc})\big)ds(\textbf{x})dt.
\end{split}
\end{equation}
Consider the set (see Figure 1)
$$E_s=\{(\xi,\omega)\in \R^4|\ |\xi|^2-n\omega^2=0,\ |\omega|<s,\   n \in(0,s^2)\}.$$ 

\begin{figure}[htbp]
 \centering	
\includegraphics[width=0.48\textwidth]{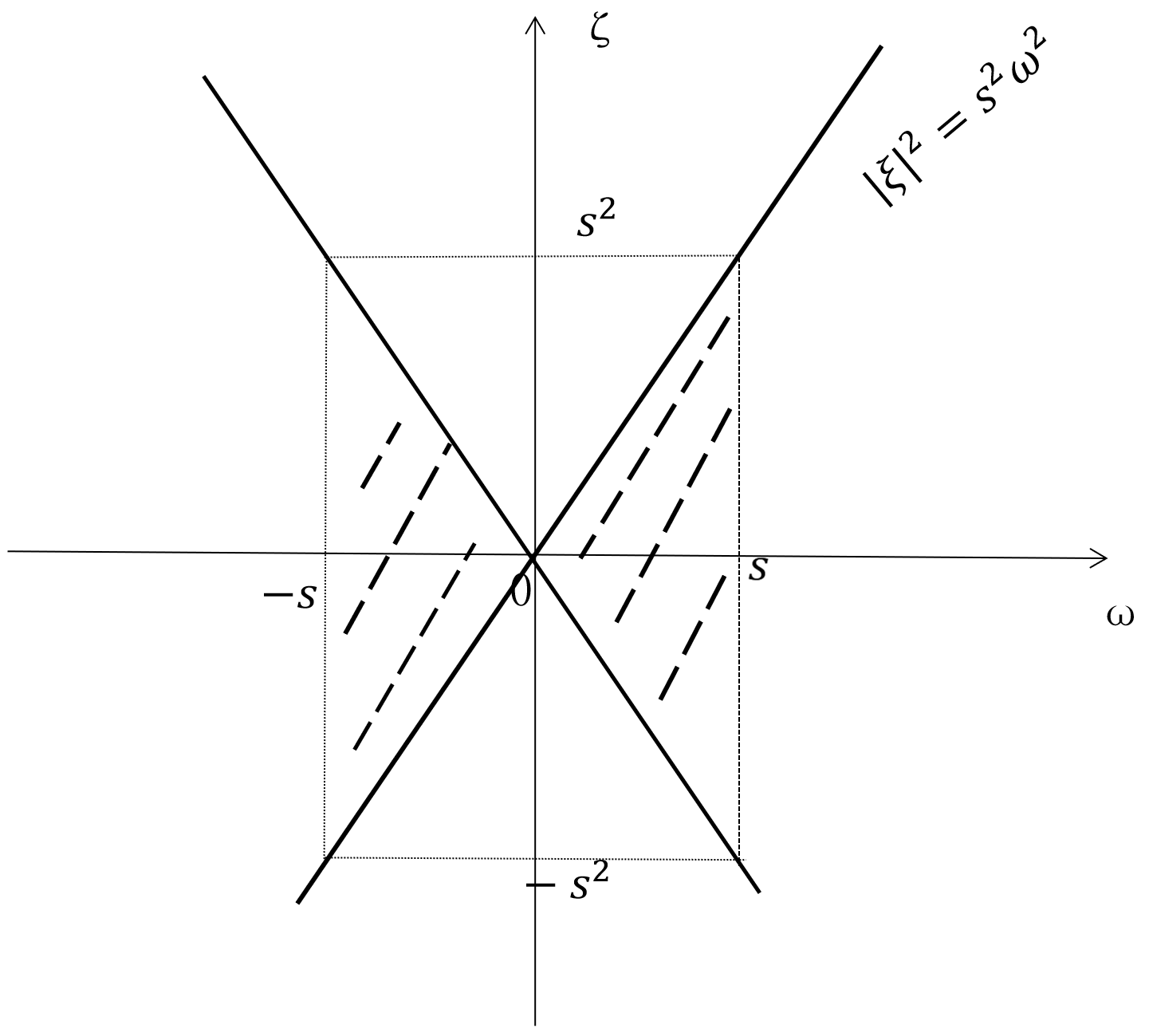}
	\caption{$E_s$ is the shadow area, $\zeta^2=|\xi|^2$.}
	\label{Fig.main}
\end{figure}
Combining $(\ref{pF})$, we have
\begin{equation}\label{hatH}
|\textbf{p}\cdot\widehat{\textbf{F}}(\xi_p,\omega)|\leq Cb^2\epsilon, \quad (\xi_p,\omega)\in {E_{b}}
\end{equation}
where $\epsilon=\sup\limits_{n\in(0,b^2)}\Big(\int_0^T\int_{\partial B_R}\big(|T(\textbf{E}\times\nu)|^2+|\textbf{E}\times\nu|^2 \big)ds(\textbf{x})dt\Big)^{\frac{1}{2}}$.
Denote
\begin{equation}\label{pfb}
I(s):=\int_{E_s}|\textbf{p}\cdot\widehat{\textbf{F}}(\xi_p,\omega)|^2d\xi_p\,d\omega.
\end{equation}
It is easy to verify from (\ref{pfb}) that
\begin{align}\label{IE}
|I(s)|\leq C\,|E_{s}|\,b^4\,\epsilon^2, \quad s\in[0,b]
\end{align}
and 
\begin{equation}
I(s)\leq\int_{E_s}|\widehat{\textbf{F}}(\xi_p,\omega)|^2d\xi_p\,d\omega.
\end{equation}
%Since 
%\begin{equation}
%\begin{split}
%\int_0^{s^2\hat{r}}|\hat{F}|^2d\omega
%=\int_0^1|\int_{\R^4}F(x,t)e^{i(s\hat{r}\hat{\xi}\cdot x+s^2\hat{r}\hat{\omega}t)}dxdt|^2s^2\hat{r}d\hat{\omega},
%\end{split}
%\end{equation}
Using the polar coordinates, we deduce that 
\[
\int_{E_s\cap\{(\xi,\omega)|\ \omega\geq0\}}|\widehat{\textbf{F}}(\xi,\omega)|^2d\xi d\omega
=\int_0^s\big(\int_0^{2\pi}\int_0^{\pi}\int_0^{s\omega}|\widehat{\textbf{F}}(r\hat{\xi},\omega)|^2 r^2\sin\varphi dr d\theta d\varphi\big)d\omega.\]
Let $r=s\omega\hat{r}$ and $\omega=s\hat{\omega}$ for $\hat{r},\hat{\omega}\in(0,1)$. 
A simple calculation yields 
\begin{equation}
\nonumber
\begin{split}
\int_{E_s\cap\{(\xi,\omega)|\ \omega\geq0\}}&|\widehat{\textbf{F}}(\xi,\omega)|^2d\xi d\omega
=\int_0^s\big(\int_0^{2\pi}\int_0^{\pi}\int_0^{1}|\widehat{\textbf{F}}(s\omega\hat{r}\hat{\xi},\omega)|^2 (s\omega)^3\hat{r}^2\sin\varphi dr d\theta d\varphi\big)d\omega\\
&=\int_0^1\big(\int_0^{2\pi}\int_0^{\pi}\int_0^{1}|\widehat{\textbf{F}}(s^2\omega\hat{r}\hat{\xi},\omega)|^2 s(s^2\hat{\omega})^3\hat{r}^2\sin\varphi dr d\theta d\varphi\big)d\hat{\omega}\\
&=\int_0^1\big(\int_0^{2\pi}\int_0^{\pi}\int_0^{1}| \int_{\R^4}\widehat{\textbf{F}}(x,t)e^{-i(s^2\hat{\omega}\hat{r}\hat{\xi}\cdot x+s\hat{\omega}t)}dxdt|^2 s(s^2\hat{\omega})^3\hat{r}^2\sin\varphi dr d\theta d\varphi\big)d\hat{\omega}.
\end{split}
\end{equation}
This integrals are analytic functions of $s=s_1+is_2$, $s_1,s_2\in\R$. 
Noting that $e^{-i(s^2\hat{\omega}\hat{r}\hat{\xi}\cdot x+s\hat{\omega}t)}\leq e^{2R|s_1s_2|+T|s_2|}$.
We have for all $s\in S$ that 
\begin{equation}
\nonumber
\big|\int_{E_s\cap\{(\xi,\omega)|  \omega\geq0\}}|\widehat{\textbf{F}}(\xi,\omega)|^2d\xi d\omega\big|\leq CM^2|s|^6e^{2R|s_1s_2|+T|s_2|}.
\end{equation}
Hence we also obtain
\begin{equation}
\nonumber
|I(s)|\leq C\,M^2|s|^6e^{2R|s_1s_2|+T|s_2|}, \quad s\in S.
\end{equation}
Let $\Delta=\max\{2R, T\}$, it follows for all $s\in S$ that
\begin{equation}
\nonumber
|e^{-(\Delta+1)s-(\Delta+1)s^2}I(s)|\leq CM^2.
\end{equation}
Recalling from (\ref{IE}) a prior estimate, we obtain
\begin{equation}
\nonumber
|e^{-(\Delta+1)s-(\Delta+1)s^2}I(s)|\leq C\,b^4\,\epsilon^2 \quad \mbox{for all} \quad s\in[0,b].
\end{equation}
Then applying Lemma \ref{2} with $L=b$ to the function $\frac{1}{b^4}e^{-(\Delta+1)s-(\Delta+1)s^2}I(s)$, we know that 
\begin{equation}
\nonumber
|I(s)|\leq CM^2\,b^4\,e^{2(\Delta+1)s^2}\epsilon^{2\mu}, \quad \forall s\in(b,\infty).
\end{equation}
Consider the set $$E^1_{s}=\{(\xi,\omega)\in \R^4| \ \ |\omega|\geq s,\ |\xi|\leq s|\omega|\},$$
we have by using the Parseval's identity that 
\begin{equation}
\nonumber
\begin{split}
I_1(s):=\int_{E^1_{s}}|\textbf{p}\cdot\widehat{\textbf{F}}|^2d\xi_p\,d\omega\leq\frac{1}{s^2}\int_{\R^3}|\widehat{\nabla (\textbf{p}\cdot\textbf{F})}|^2d\xi_p\,d\omega
\leq\,C\,\frac{M^2}{s^2}.
\end{split}
\end{equation}
Denote
\begin{equation}
\nonumber
I_2(s):=\int_{E^2_{s}}|\textbf{p}\cdot\widehat{\textbf{F}}|^2d\xi_p\,d\omega,
\end{equation}
where
$$E^2_{s}=\{(\xi,\omega)\in \R^4|\  \ |\xi|\geq s|\omega| \}.$$
It is easy to obtain
\begin{equation}
\nonumber
I_2(s)\leq\int_{E^2_{s}}|\widehat{\textbf{F}}|^2d\xi\,dw,
\end{equation}
Similarly, by using the polar coordinates. Let $r=s\hat{r}$ and $\omega=s^2\hat{r}\hat{\omega}$ for $\hat{r},\hat{\omega}\in(0,1)$. We get
\begin{equation}
\nonumber
\begin{split}
&\int_{E^2_{s}\cap\{(\xi,\omega)|\ \omega\geq0\}}|\textbf{p}\cdot\widehat{\textbf{F}}(\xi,\omega)|^2d\xi d\omega\\
&=\int_0^{2\pi}\int_0^{\pi}\int_0^{\infty}\big(\int_0^{\frac{r}{s}}|\textbf{p}\cdot\widehat{\textbf{F}}(r\hat{\xi},\omega)|^2d\omega\big)r^2\sin\varphi dr\,d\theta\,d\varphi\\
&=\int_0^{2\pi}\int_0^{\pi}\int_0^{\infty}\big(\int_0^{1}|\textbf{p}\cdot\widehat{\textbf{F}}(r\hat{\xi},\frac{1}{s}r\hat{\omega})|^2\frac{1}{s}rd\hat{\omega}\big)r^2\sin\varphi dr\,d\theta\,d\varphi\\
&=\int_0^{2\pi}\int_0^{\pi}\int_0^{1}\big(\int_0^{1}|\textbf{p}\cdot\widehat{\textbf{F}}(r\hat{\xi},\frac{1}{s}r\hat{\omega})|^2\frac{1}{s}rd\hat{\omega}\big)r^2\sin\varphi dr\,d\theta\,d\varphi\\
&+\int_0^{2\pi}\int_0^{\pi}\int_1^{\infty}\big(\int_0^{1}|\textbf{p}\cdot\widehat{\textbf{F}}(r\hat{\xi},\frac{1}{s}r\hat{\omega})|^2\frac{1}{s}rd\hat{\omega}\big)r^2\sin\varphi dr\,d\theta\,d\varphi.
\end{split}
\end{equation}
Since 
\begin{equation}
\nonumber
\begin{split}
\int_0^{1}|\textbf{p}\cdot\widehat{\textbf{F}}(r\hat{\xi},\frac{1}{\sqrt{n}}r\hat{\omega})|^2d\hat{\omega}
=\frac{1}{r^6}\int_0^{1} |\int_{\R^4}\textbf{p}\cdot\Delta\textbf{F}(x,t)e^{-i(r\hat{\xi}\cdot x+\frac{1}{\sqrt{n}}r\hat{\omega} t)}dxdt|^2d\hat{\omega}
\leq \frac{M^2}{r^3},
\end{split}
\end{equation}
which gives
\begin{equation}
\nonumber
\big|\int_{E^2_{s}\cap\{(\xi,\omega)|\  \omega\geq0\}}|\textbf{p}\cdot\widehat{\textbf{F}}(\xi,\omega)|^2d\xi d\omega\big| \leq C\,\frac{M^2}{s}.
\end{equation}
Consequently
\begin{equation}
\nonumber
I_2(s) \leq C\,\frac{M^2}{s}.
\end{equation}

Now we show the proof of Theorem $\ref{TH3}$.
We assume that $\epsilon<e^{-1}$, otherwise the estimate is obvious. Let 
\begin{eqnarray}
s=
\begin{cases}
\frac{1}{(2(\Delta+2)\pi)^{\frac{1}{4}}}b^{\frac{1}{2}}|\ln\epsilon|^{\frac{1}{5}} & \textrm{if}\ |\ln\epsilon|^{\frac{1}{5}}>2^{\frac{1}{4}}b^{\frac{1}{2}}(2(\Delta+2)\pi)^{\frac{1}{4}},\\
b & \textrm{if} \ |\ln\epsilon|^{\frac{1}{5}}\leq2^{\frac{1}{4}}b^{\frac{1}{2}}(2(\Delta+2)\pi)^{\frac{1}{3}}.
\end{cases}
\end{eqnarray}

Case (i): $|\ln\epsilon|^{\frac{1}{5}}>2^{\frac{1}{4}}b^{\frac{1}{2}}(2(\Delta+2)\pi)^{\frac{1}{4}}$. 
One can check that 
\[s>2^{\frac{1}{4}}b.\]
Thus, using Lemma \ref{2}, we obtain
\begin{eqnarray}
|I(s)|&\leq & C\,M^2\,b^4\, e^{2(\Delta+2)s^2}\epsilon^{2\mu(s)}\cr
&\leq & C\,M^2\,b^4\, e^{-2\mu(s)|\ln\epsilon|+2(\Delta+2)s^2}\cr
&\leq & C\,M^2\,b^4\,e^{-\big(\frac{-2|\ln\epsilon|}{\pi}(\frac{b}{s})^2+2(\Delta+2)s^2\big)}\cr
&\leq & C\,M^2\,b^4\,e^{-2(\frac{2(\Delta+2)}{\pi})^{\frac{1}{2}}b|\ln\epsilon|^{\frac{3}{5}}(1-\frac{1}{2}|\ln\epsilon|^{-\frac{1}{5}}) }.
\end{eqnarray}
Noting that 
$\frac{1}{2}|\ln\epsilon|^{-\alpha}<\frac{1}{2}$ and $(\frac{2(\Delta+2)}{\pi})^{\frac{1}{2}}>1$,
we have 
\[|I(s)|\leq C\,M^2\,b^4\,e^{-b|\ln\epsilon|^{\frac{3}{5}}}.\]
Using the elementary inequality 
\[e^{-t}\leq \frac{5!}{t^5}, \quad t>0,\]
we get 
\begin{equation}
\nonumber
|I(s)|\leq C\frac{b^4M^2}{b^5|\ln\epsilon|^{3}}  \leq C\frac{M^2}{b|\ln\epsilon|^{3}}.
\end{equation}

Case (ii): $|\ln\epsilon|^{\frac{1}{5}}\leq2^{\frac{1}{4}}b^{\frac{1}{2}}(2(\Delta+2)\pi)^{\frac{1}{4}}$. In this case we have for $s=b$ that
\[|I(s)|=|I(b)| \leq b^{11}\,\epsilon^2.\]
Combining  estimates of $I_1(s)$ and $I_2(s)$ , we obtain 
\begin{eqnarray}\label{PF1}
\int_{E_s}|\textbf{p}\cdot\widehat{\textbf{F}}|^2d\xi_p\,d\omega+\int_{E^1_s}|\textbf{p}\cdot\widehat{\textbf{F}}|^2d\xi_p\,d\omega
+\int_{E^2_s}|\textbf{p}\cdot\widehat{\textbf{F}}|^2d\xi_p\,d\omega
&=&I(s)+I_1(s)+I_2(s)\cr
&\leq& C\Big(b^{11}\epsilon^2+\frac{M^2}{b|\ln\epsilon|^{3}}+\frac{M^2}{b^{\frac{1}{2}}|\ln\epsilon|^{\frac{1}{5}}}  \Big)\cr
&\leq& C\Big(b^{11}\epsilon^2+\frac{M^2}{b^{\frac{1}{2}}|\ln\epsilon|^{\frac{1}{5}}}\Big)
\end{eqnarray}
Since $\big(b|\ln\epsilon|^{3}\big)>\big(b|\ln\epsilon|^{\frac{1}{5}}\big)$ when $b>1$ and $|\ln\epsilon|>1$.

Repeating similar steps, we get 
\begin{equation}\label{PF2}
\int_{E_s}|\textbf{q}\cdot\widehat{\textbf{F}}|^2d\xi_p\,d\omega+\int_{E^1_s}|\textbf{q}\cdot\widehat{\textbf{F}}|^2d\xi_p\,d\omega
+\int_{E^2_s}|\textbf{q}\cdot\widehat{\textbf{F}}|^2d\xi_p\,d\omega
\leq C\Big(b^{11}\epsilon^2+\frac{M^2}{b^{\frac{1}{2}}|\ln\epsilon|^{\frac{1}{5}}}\Big).
\end{equation}
On the other hand, since $\nabla\cdot\textbf{F}=0$, we have that 
\[-i\kappa_p\textbf{d}\cdot\widehat{\textbf{F}}(\kappa_p\textbf{d},\omega)=\int_{\R^4}\nabla e^{-i(\kappa_p \textbf{x}\cdot\textbf{d}+\omega t)}\cdot \textbf{F}(\textbf{x},t)d\textbf{x}dt=\int_0^T\int_{B_R} e^{-i(\kappa_p \textbf{x}\cdot\textbf{d}+\omega t)}\nabla\cdot \textbf{F}(\textbf{x},t)d\textbf{x}dt=0\]
Using the Pythagorean theorem yields
\begin{equation}
\nonumber
|\widehat{\textbf{F}}(\xi_p,\omega)|^2=|\textbf{p}\cdot\widehat{\textbf{F}}(\xi_p,\omega)|^2
+|\textbf{q}\cdot\widehat{\textbf{F}}(\xi_p,\omega)|^2+|\textbf{d}\cdot\widehat{\textbf{F}}(\xi_p,\omega)|^2.
\end{equation}
According to $\R^4=E_s\cup E^1_s\cup E^2_s$, we get
\begin{equation}\label{PF3}
\begin{split}
||\textbf{F}||^2_{L^2(\R^4)^3}=||\widehat{\textbf{F}}||^2_{L^2(\R^4)^3}
&=\int_{E_s}|\textbf{p}\cdot\widehat{\textbf{F}}|^2d\xi_p\,d\omega+\int_{E^1_s}|\textbf{p}\cdot\widehat{\textbf{F}}|^2d\xi_p\,d\omega
+\int_{E^2_s}|\textbf{p}\cdot\widehat{\textbf{F}}|^2d\xi_p\,d\omega\\
&+\int_{E_s}|\textbf{q}\cdot\widehat{\textbf{F}}|^2d\xi_p\,d\omega+\int_{E^1_s}|\textbf{q}\cdot\widehat{\textbf{F}}|^2d\xi_p\,d\omega
+\int_{E^2_s}|\textbf{q}\cdot\widehat{\textbf{F}}|^2d\xi_p\,d\omega.
\end{split}
\end{equation}
Combining the above estimates (\ref{PF1})-(\ref{PF3}), we obtain the stability estimate (\ref{es2}).
This completes the proof.

\section{Proof of Theorem \ref{TH4}}\label{th4}

In this section, we assume that $\textbf{F}$ takes the form
\begin{equation}\label{3F0}
\textbf{F}(x_1,x_2,x_3,t)=\textbf{f}(\tilde{x},t)g(x_3), \quad \tilde{x}=(x_1, x_2)\in\R^2, \ x_3\in\R, \quad   t\in(0,T_0).
\end{equation}
%where $f$ is compactly supported on $\widetilde{B}_{R_0}\times(0,T_0)$ for some $0<R_0<R/\sqrt{2}$ and $g\in H^1(\R)$ is supported on $(-R_0,R_0)$. Here $\tilde{x}=(x_1,x_2)\in\R^2$ for $x=(x_1,x_2,x_3)\in\R^3$ and $\widetilde{B}_{R_0}:=\{\tilde{x}\in\R^2:\ |\tilde{x}|<R_0\}$.
Then the equation $(\ref{eq1})$ becomes
\begin{equation}\label{fg}
n\partial_t^2\textbf{E}+\nabla\times(\nabla\times\textbf{E})=\textbf{f}(\tilde{x},t)\,g(x_3).
\end{equation}
Assume that $g$ is known, we establish an increasing stability estimate about $\textbf{f}$ from the Dirichlet data $\{\textbf{E}(\textbf{x},t)| \ \textbf{x}\in\partial B_R, \ t\in(0,T_0)\}$.

Multiplying the both sides of (\ref{fg}) by $\textbf{E}^{inc}$, we obtain
\begin{equation}\label{nfE}
  \int_0^T\int_{B_R} (n\partial_t^2\textbf{E}+\nabla\times(\nabla\times\textbf{E})\textbf{E}^{inc}\ \mathrm{d}\textbf{x}\mathrm{d}t=\int_0^T\int_{B_R} \textbf{f}(\tilde{x},t)\,g(x_3) \textbf{E}^{inc}(\textbf{x},t)\ \mathrm{d}\textbf{x}\mathrm{d}t.
\end{equation}
Integrating by parts, one deduces from the left hand side of (\ref{nfE}) that
\begin{align*}
  \int_0^T\int_{B_R}&(n\partial_t^2\textbf{E}+\nabla\times(\nabla\times\textbf{E})\textbf{E}^{inc}\ \mathrm{d}\textbf{x}\mathrm{d}t \\
  =&\int_{B_R} n(\partial_{t}\textbf{E}(\textbf{x},t) \textbf{E}^{inc}(\textbf{x},t)- \textbf{E}(\textbf{x},t) \partial_{t}\textbf{E}^{inc}(\textbf{x},t))\big|_0^T\;\mathrm{d}\textbf{x}\\
  &\quad+\int_0^T\int_{\partial B_R} \big(\nu\times(\nabla\times \textbf{E})\cdot\textbf{E}^{inc}-\nu\times(\nabla\times \textbf{E}^{inc})\cdot E\big) ds(\textbf{x})dt.\\
=&-\int_0^T\int_{\partial B_R} \big((T(\textbf{E}\times\nu)\cdot \textbf{E}^{inc})+(\textbf{E}\times\nu)\cdot(\nabla\times \textbf{E}^{inc})\big)ds(\textbf{x})dt.
\end{align*}
Note that, in the last step we have used the fact that $\textbf{E}(\textbf{x},t)=0$ when $|\textbf{x}|<R$ and $t>T_0+2R$, which follows straightforwardly from Huygens' principle (see e.g.,
 \cite[Lemma 2.1]{HLLZ2018}). This implies $\textbf{E}(\textbf{x},T)=\partial_t \textbf{E}(\textbf{x},T)=0$ for $\textbf{x}\in B_R$ and $T>T_0+2R$. Hence, the integral over $B_R$ on the left hand side of the previous identity vanishes. 
Thus (\ref{nfE}) becomes
\begin{equation}\label{pfg}
\begin{split}
\int_{B_R}\textbf{p}e^{-i(\kappa_pd_1x_1+\kappa_pd_2x_2+\omega t)}\cdot&\textbf{f}(x_1,x_2,t)d\tilde{x}\int_0^Tg(x_3)e^{-i\xi_3x_3}dt\\
&=-\int_0^T\int_{\partial B_R} \big((T(\textbf{E}\times\nu)\cdot \textbf{E}^{inc})+(\textbf{E}\times\nu)\cdot(\nabla\times \textbf{E}^{inc})\big)ds(\textbf{x})dt
\end{split}
\end{equation}
Let $\tilde{\xi}^2=\xi_1^2+\xi_2^2$.
Define the set (see Figure 2)
\begin{equation}
\nonumber
E_s=\{(\xi_1,\xi_2,\omega)\in\R^3| \ n\omega^2-|\xi_1|^2-|\xi_2|^2=|\xi_3|^2, \ |\omega|\leq\frac{s}{\sqrt{n}}, \ |\xi_3|\leq s\}.
\end{equation}
\begin{figure}[htbp]
 \centering	
\includegraphics[width=0.48\textwidth]{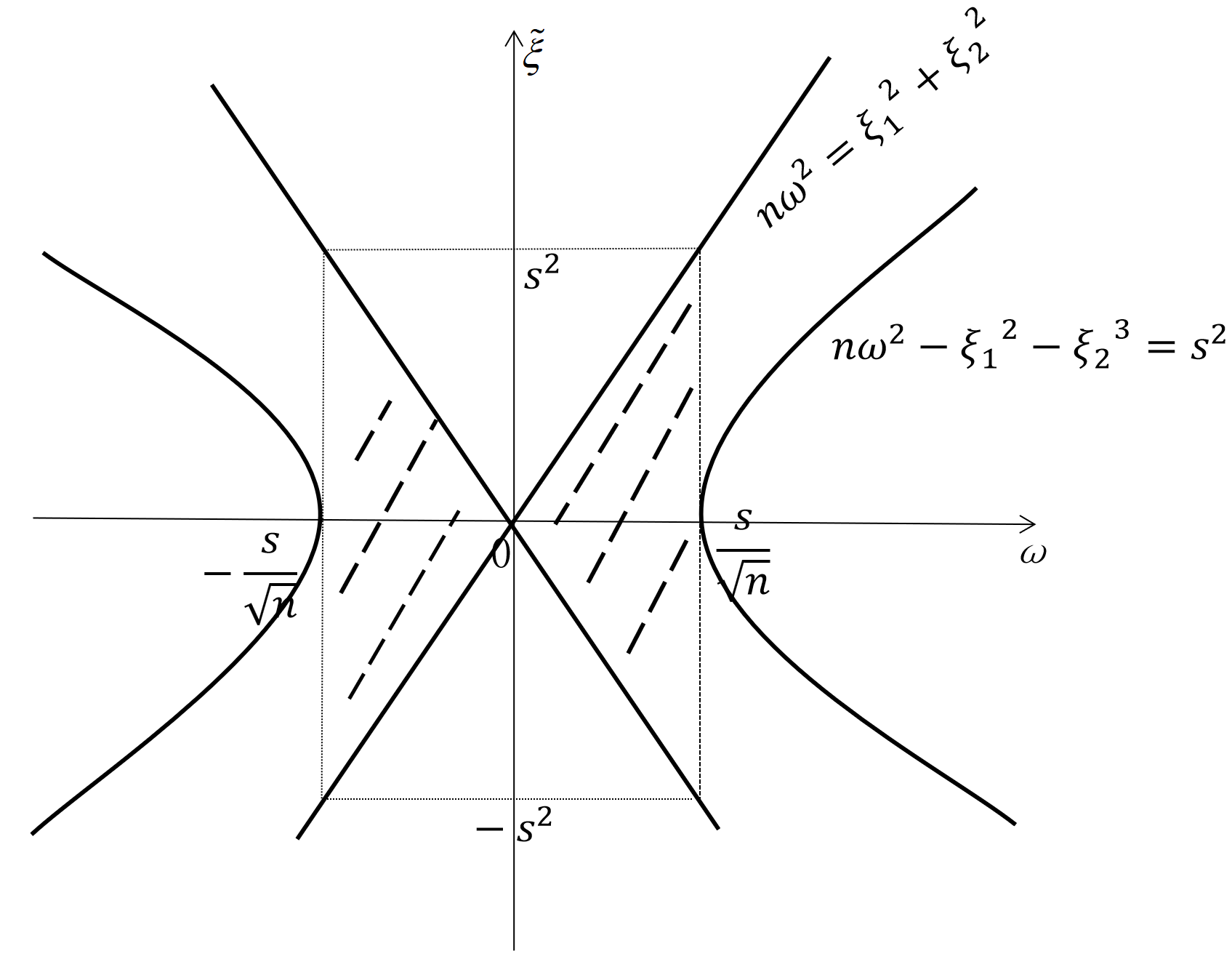}
	\caption{$E_s$ is the shadow area,\ $\tilde{\xi}^2=\xi_1^2+\xi_2^2$.}
	\label{Fig.E}
\end{figure}

Let $\xi_p=\xi=\kappa_p\textbf{d}=(\kappa_pd_1,\kappa_pd_2,\kappa_pd_3)$.
Combining (\ref{pfg}) and  $|\widehat{g}(\xi_3)|\geq\delta>0$ for $\xi_3\in(-b,b)$, we get that for $(\kappa_pd_1,\kappa_pd_2,\omega)\in E_b$ 
\begin{equation}\label{3hf}
\begin{split}
 |\textbf{p}\cdot\widehat{\textbf{f}}(\kappa_pd_1,\kappa_pd_2,\omega)|\leq C\frac{(1+\kappa_p)\epsilon}{|\hat{g}(\xi_3)|}&\leq C \delta^{-1} (1+\kappa_p) \epsilon\\
 &\leq\,C\,b\epsilon.
\end{split}
\end{equation}
Given
$$I(s)=\int_{E_s}|\textbf{p}\cdot\widehat{\textbf{f}}(\xi_1,\xi_2,\omega)|^2d\xi_1d\xi_2d\omega,$$ 
it is easy to verify that 
\begin{equation}
I(s)\leq\int_{E_s}|\widehat{\textbf{f}}(\xi_1,\xi_2,\omega)|^2d\xi_1d\xi_2d\omega.
\end{equation}
Using the polar coordinates $\xi_1=r\sin\theta$,  $\xi_2=r\cos\theta$ for
$0\leq r\leq s$,  $0\leq\theta\leq2\pi$, we obtain that
\begin{equation}
\nonumber
\begin{split}
\int_{E_s\cap\{(\xi_1,\xi_2,\omega)|\ \omega\geq0\}}|\widehat{\textbf{f}}|^2d\xi_1d\xi_2d\omega=\int_0^s\int_0^{2\pi}\int_{\frac{1}{\sqrt{n}}r}^{\frac{1}{\sqrt{n}}s}|\widehat{\textbf{f}}|^2r\,d\omega \,dr\,d\theta.
\end{split}
\end{equation}
Let $r=s\hat{r}$, $r\hat{\xi}=(\xi_1,\xi_2)$ and $\hat{\omega}\in(0,1)$. A simple calculation yields 
\begin{equation}
\nonumber
\begin{split}
&\int_{E_s\cap\{(\xi_1,\xi_2,\omega)|\ \omega\geq0\}}|\widehat{\textbf{f}}(\xi_1,\xi_2,\omega)|^2d\xi_1d\xi_2d\omega\\
&=\int_0^s\int_0^{2\pi}\int_{\frac{1}{\sqrt{n}}r}^{\frac{1}{\sqrt{n}}s}|\widehat{\textbf{f}}(\xi_1,\xi_2,\omega)|^2d\omega rdrd\theta\\
&=\int_0^1\int_0^{2\pi}\big(\int_{\frac{1}{\sqrt{n}}s\hat{r}}^{\frac{1}{\sqrt{n}}s}|\widehat{\textbf{f}}(r\hat{\xi},\omega)|^2d\omega\big)s^2\hat{r}d\hat{r}d\theta\\
&=\int_0^1\int_0^{2\pi}\big(\int_0^{\frac{1}{\sqrt{n}}s}|\widehat{\textbf{f}}(s\hat{r}\hat{\xi},\omega)|^2d\omega-\int_0^{\frac{1}{\sqrt{n}}s\hat{r}}|\widehat{\textbf{f}}(s\hat{r}\hat{\xi},\omega)|^2d\omega\big) s^2\hat{r}d\hat{r}d\theta\\
&=\int_0^1\int_0^{2\pi}\big(\int_0^1|\textbf{f}_1(s\hat{r}\hat{\xi},\frac{1}{\sqrt{n}}s\hat{\omega})|^2\frac{1}{\sqrt{n}}s^3\hat{r}d\hat{\omega}-\int_0^1|\textbf{f}_2(s\hat{r}\hat{\xi},\frac{1}{\sqrt{n}}s\hat{r}\hat{\omega})|^2\frac{1}{\sqrt{n}}s^3\hat{r}^2d\hat{\omega}\big) d\hat{r}d\theta
\end{split}
\end{equation}
where 
\begin{equation}
\nonumber
\textbf{f}_1(s\hat{r}\hat{\xi},\frac{1}{\sqrt{n}}s\hat{\omega})=\int_{\R^3}\textbf{f}(x_1,x_2,t)e^{-i(s\hat{r}\hat{\xi}\cdot\tilde{x}+\frac{1}{\sqrt{n}}s\hat{\omega} t)}d\tilde{x}dt
\end{equation}
and
\begin{equation}
\nonumber
\textbf{f}_2(s\hat{r}\hat{\xi},\frac{1}{\sqrt{n}}s\hat{r}\hat{\omega})=\int_{\R^3}\textbf{p}\cdot\textbf{f}(x_1,x_2,t)e^{-i(s\hat{r}\hat{\xi}\cdot\tilde{x}+\frac{1}{\sqrt{n}}s\hat{r}\hat{\omega}t)}d\tilde{x}dt.
\end{equation}
This integral is an analytic function of $s=s_1+is_2\in \C$, $s_1,s_2\in\R$.
Noting that 
$|-i(s\hat{r}\hat{\xi}\cdot\tilde{x}+\frac{1}{\sqrt{n}}s\hat{\omega} t|\leq (R+\frac{1}{\sqrt{n}}T)|s_2|$ and $|-i(s\hat{r}\hat{\xi}\cdot \tilde{x}+\frac{1}{\sqrt{n}}s\hat{r}\hat{\omega}t)|\leq (R+\frac{1}{\sqrt{n}}T)|s_2|$, we deduce 
\begin{equation}
\nonumber
\begin{split}
|I(s)|\leq CM^2|s|^3e^{((R+cT)+1)|s_2|}, \ s\in S
\end{split}
\end{equation}
where $c:=\frac{1}{\sqrt{n}}$ and $C>0$ depends on $n$, $T_0$ and $R$.

From estimate (\ref{3hf}), we have
\begin{equation}\label{3Cb}
\begin{split}
|I(s)|\leq C\,|E_s|\,b^2\,\epsilon^2 \quad \mbox{for all} \quad s\in[0,b],
\end{split}
\end{equation}
where $C>0$ depends on $n$, $\delta$, $T_0$ and $R$.
Then applying Lemma \ref{2}, we know for all $s>b$ that 
\begin{equation}
\nonumber
\begin{split}
|I(s)|\leq CM^2\,b^2\,e^{((R+cT)+1)s}\epsilon^{2\mu(s)},
\end{split}
\end{equation}
where $C>0$ depends on $n$, $\delta$, $T_0$ and $R$.

Define $E_s^1=\{(\xi_1,\xi_2,\omega)\in\R^3| \, |\omega|>\frac{1}{\sqrt{n}}s, \ \xi_1^2+\xi_2^2\leq n\omega^2 \}$. 
The Parseval's identity yields
\[I_1(s)=\int_{E^1_s}|\textbf{p}\cdot\widehat{\textbf{f}}(\xi_1,\xi_2,\omega)|^2d\xi_1d\xi_2d\omega\leq \frac{C}{s^2}\int_{E^1_s}|\widehat{\nabla (\textbf{p}\cdot\textbf{f})}(\xi_1,\xi_2,\omega)|^2d\xi_1d\xi_2d\omega \leq\,C\,\frac{M^2}{s^2},\]
where $C>0$ depends on $n$.
Consider the set 
\begin{equation}
\nonumber
E^2_s=\{(\xi_1,\xi_2,\omega)\in\R^3| \, \omega^2+\xi_1^2+\xi_2^2\leq \frac{1}{n}s^2, \ \xi_1^2+\xi_2^2\geq n\omega^2 \}.
\end{equation}
Now we estimate  
$$I_2(s)=\int_{E^2_s}|\textbf{p}\cdot\widehat{\textbf{f}}(\xi_1,\xi_2,\omega)|^2d\xi_1d\xi_2d\omega.$$
The following Lemma \ref{lem} is essential to estimate $I_2(s)$.
For $r>0$, denote $B(0,r)=\{x\in\R^d: |x|<r\}$. Below we state a stability estimate for analytic continuation problems, which can be seen in \cite{Mourad-Ben, Vessella}.
\begin{lemma}\label{lem}
Let $\mathcal{O}$ be a non empty open set of the unit ball $B(0,1)\subset
\mathbb{R}^{d}$, $d\geq2$, and let $G$ be an analytic function in $B(0,2),$ that satisfy
$$\|\partial^{\gamma}G\|_{L^{\infty}(B(0,2))}\leq M_0\,|\gamma|!\;{\eta^{-|\gamma|}},\,\,\,\,\forall\,\gamma\in(\mathbb{N}\cup\{0\})^{d},$$
for some  $M_0>0$ and $\eta>0$. Then, we have
$$\|G\|_{L^{\infty}(B(0,1))}\leq N\, M_0^{1-\mu}\;\|G\|_{L^{\infty}(\mathcal{O})}^{\mu},$$
where $\mu\in(0,1)$ depends on $d$, $\eta$ and $|\mathcal{O}|$ and $N=N(\eta)>0$.
\end{lemma}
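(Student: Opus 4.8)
The plan is to read this as a quantitative unique continuation (propagation of smallness) statement for analytic functions and to prove it in two stages: first complexify $G$ using the Cauchy-type bounds, and then propagate the smallness of $G$ from $\mathcal{O}$ to all of $B(0,1)$ through a chain of overlapping balls by means of a three-ball (Hadamard three-circle) inequality.

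First I would complexify. The hypothesis $\|\partial^{\gamma}G\|_{L^{\infty}(B(0,2))}\le M_0\,|\gamma|!\,\eta^{-|\gamma|}$ is exactly a Cauchy estimate: expanding $G$ in its Taylor series about any $x_0\in B(0,2)$ and summing the resulting multinomial series, one gets, for $w=z-x_0\in\C^{d}$,
\[
|\tilde G(z)|\le M_0\sum_{k\ge 0}\Big(\frac{|w_1|+\cdots+|w_d|}{\eta}\Big)^{k}.
\]
Since $|w_1|+\cdots+|w_d|\le d\,|w|$, this series converges and is bounded by $2M_0$ as soon as $|w|\le \eta/(2d)$. Hence $G$ extends to a holomorphic function $\tilde G$ on the complex tube $\{z\in\C^{d}:\operatorname{dist}(z,B(0,2))<\eta/(2d)\}$ with $\|\tilde G\|_{L^{\infty}}\le 2M_0$ there. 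From now on all estimates are carried out for $\tilde G$, and the conclusion for $G$ follows by restriction to $\R^{d}$.

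Second, I would establish a local three-ball inequality. Fixing an interior point $x_0$ and restricting $\tilde G$ to complex discs through it, the classical Hadamard three-circle theorem yields, for concentric balls $B(x_0,\rho_1)\subset B(x_0,\rho_2)\subset B(x_0,\rho_3)$ of comparable radii contained in the tube,
\[
\|\tilde G\|_{L^{\infty}(B(x_0,\rho_2))}\le C\,\|\tilde G\|_{L^{\infty}(B(x_0,\rho_1))}^{\theta}\,\|\tilde G\|_{L^{\infty}(B(x_0,\rho_3))}^{1-\theta},
\]
with $\theta\in(0,1)$ and $C$ depending only on $d$, $\eta$ and the radius ratios; the passage from the one-variable statement to the ball statement uses the maximum principle for the plurisubharmonic function $\log|\tilde G|$. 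Combining with the global bound $\|\tilde G\|_{L^{\infty}}\le 2M_0$ from the first step gives $\|\tilde G\|_{L^{\infty}(B(x_0,\rho_2))}\le C\,(2M_0)^{1-\theta}\,\|\tilde G\|_{L^{\infty}(B(x_0,\rho_1))}^{\theta}$. I would then run the propagation of smallness: starting from a ball inside $\mathcal{O}$, choose a chain $B(y_0,\rho_1),\dots,B(y_N,\rho_1)$ with $y_0\in\mathcal{O}$, each consecutive pair overlapping so that $B(y_{j+1},\rho_1)\subset B(y_j,\rho_2)$, reaching any prescribed target point of $B(0,1)$. Iterating the three-ball inequality along the chain converts the smallness $\|\tilde G\|_{L^{\infty}(\mathcal O)}$ into the asserted interpolation bound, with final exponent $\mu=\theta^{N}$ and $N$ the length of the chain; here $N$, hence $\mu$, depends only on $d$, $\eta$ and $|\mathcal O|$, while the prefactor $N=N(\eta)$ in the statement collects the accumulated constants $C$ and a power of $2$.

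The hard part will be making the chaining step quantitative: one must cover the compact set $\overline{B(0,1)}$ by balls of controlled radius staying inside the holomorphy tube, bound the number $N$ of balls in the connecting chains uniformly in the target point, and track how the Hölder exponent contracts under composition so that the stated dependence of $\mu$ on $d$, $\eta$ and $|\mathcal O|$ is explicit (the measure $|\mathcal O|$ entering through the size of the initial ball that fits in $\mathcal O$). The several-variable three-ball inequality — the transition from Hadamard's one-variable theorem to balls in $\R^{d}$ via plurisubharmonicity — is the other delicate point, since the radius ratios and the tube width $\eta$ determine $\theta$ and thus control the rate of degradation of the stability exponent.
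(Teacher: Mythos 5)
The paper does not actually prove this lemma: it is quoted verbatim from the references \cite{Mourad-Ben, Vessella}, so there is no in-paper argument to compare yours against. Your proposal is the standard propagation-of-smallness proof that underlies those references, and its first two stages are sound: the derivative bounds do give a bounded holomorphic extension to a complex tube of width comparable to $\eta/d$ (your multinomial summation is correct), and the Hadamard three-circle/three-ball inequality combined with a chain of overlapping balls inside the tube does yield an estimate of the form $\|G\|_{L^{\infty}(B(0,1))}\leq C\,M_0^{1-\theta^{N}}\|G\|_{L^{\infty}(B(y_0,\rho_1))}^{\theta^{N}}$.

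The genuine gap is in your very first link of the chain, flagged by your parenthetical remark that ``the measure $|\mathcal O|$ enters through the size of the initial ball that fits in $\mathcal O$.'' That is false: a nonempty open set of prescribed Lebesgue measure need not contain a ball of any prescribed radius (a thin slab intersected with $B(0,1)$ has measure bounded below but arbitrarily small inradius). Consequently your chaining argument proves a version of the lemma in which $\mu$ depends on the \emph{inradius} of $\mathcal O$, not on $|\mathcal O|$ as stated. To obtain dependence on the measure alone one needs an extra ingredient before the chaining can start, namely a Remez-type (Nadirashvili-type) inequality for analytic functions that bounds the supremum on a ball by a power of the supremum on a subset of given positive measure times a power of the global bound; this is precisely the step supplied in Vessella's proof. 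In the present paper the distinction is harmless because the lemma is applied with $\mathcal O=E_s^2$, an explicit open set whose inradius is controlled, but as a proof of the lemma as stated your argument is incomplete at this point.
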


Define
$F_s(\xi_1,\xi_2,\omega)=\textbf{p}\cdot\widehat{\textbf{f}}(s\xi_1,s\xi_2,s\omega)$ for any $(\xi_1,\xi_2,\omega)\in\mathbb{R}^3$ and $s>0$. Since $\textbf{f}$ is compactly supported, one can see that the function $F_s$ is analytic and it satisfies for $\gamma\in (\N\cup\{0\})^3$ that
\begin{eqnarray*}
|\partial^\gamma F_s(\xi_1,\xi_2,\omega)|=|\partial^\gamma F(s\xi_1,s\xi_2,s\omega)|=\Big|\partial^\gamma \int_{\mathbb{R}^3}\textbf{p}\cdot\textbf{f}(x_1,x_2,t) e^{-is(\xi_1\cdot x_1+\xi_2\cdot x_2+\omega t)}\,dx_1 dx_2dt\Big|\cr
= \Big|  \sum_{\alpha+\beta=\gamma \atop \alpha,\beta\in\N\cup0}\int_{\mathbb{R}^3} (-i)^{|\gamma|} s^{|\gamma|} \tilde{x}^{\alpha}t^{\beta} \textbf{p}\cdot\textbf{f}(x_1,x_2,t) e^{-ib(\xi_1\cdot x_1+\xi_2\cdot x_2+\omega t)}\,dx_1 dx_2dt \Big|.
\end{eqnarray*}
Using $s^{|\gamma|}<|\gamma|!\, e^s $ and  $\eta=\max\{R,T\}^{-1}$, 
we obtain
\begin{eqnarray}
\nonumber
|\partial^\gamma F_s(\xi_1,\xi_2,\omega)|\leq \|\textbf{f}\|_{L^2(B_R\times(0,T))}\, \eta^{-|\gamma|} \,s^{|\gamma|} 
\leq  C\|\textbf{f}\|_{L^2(B_R\times(0,T))}\,\eta^{-|\gamma|}\, |\gamma|!\, e^s \leq  C M\,\eta^{-|\gamma|}\, |\gamma|!\, e^s,
\end{eqnarray}
where $C>0$ depends on $R$ and $T$.
Applying Lemma \ref{lem} to the set $\mathcal{O}$ defined as $\mathcal{O}:= E_s^2$,
we find a constant $\alpha\in (0,1)$ such that
 $$\|F_s\|_{L^\infty(B(0,1))}\leq C\,M\, e^{s(1-\alpha)} \|F_s\|^\alpha_{L^\infty(\mathcal{O})}.$$
Using the fact that $\textbf{p}\cdot\widehat{\textbf{f}}(\xi)=F_s(s^{-1}\xi)$, one gets the following estimate
\begin{eqnarray}\nonumber
\|\textbf{p}\cdot\widehat{\textbf{f}}\|_{L^{\infty}(B(0,s))}&=&\|F_s\|_{L^\infty(B(0,1))}\leq CMe^{s(1-\alpha)} \|F_s\|_{L^\infty(\mathcal{O})}^\alpha\\
&\leq& C\,M\, e^{s(1-\alpha)} \|\textbf{p}\cdot\widehat{\textbf{f}}\|_{L^\infty(E_s^2)}^\alpha
\leq C\,M\,e^{s(1-\alpha)} b^{\alpha}\epsilon^\alpha,
\end{eqnarray}
where $C>0$ depends on $R$, $T$, $\delta$ and $n$.
Combining (\ref{3hf}), we know for $s\in[0,b]$ that 
\begin{equation}\nonumber
|I_2(s)|\leq CM^2|E^2_s|\,e^{2s(1-\alpha)}\, b^{2\alpha}\,\epsilon^{2\alpha}
\end{equation}
and similarly 
\begin{equation}\nonumber
\begin{split}
|I_2(s)|\leq CM^2|s|^3\,e^{\big((R+cT)+1\big)|s_2|} \quad \mbox{for all} \quad s\in S.
\end{split}
\end{equation}
Thus applying Lemma \ref{2}, we have 
\begin{equation}
\begin{split}\label{rct}
|e^{-((R+cT)+3)s}I_2(s)|\leq CM^2b^{2\alpha}(\epsilon^{\alpha})^{2\mu} \quad \mbox{for all} \quad s>b,
\end{split}
\end{equation}
where $C>0$ depends on $R$, $T$, $\delta$ and $n$.

Consider the set 
\begin{equation}\nonumber
E^3_s=\{(\xi_1,\xi_2,\omega)\in\R^3| \, \ \omega^2+|\xi_1|^2+|\xi_2|^2>\frac{1}{n}s^2, \ \xi_1^2+\xi_2^2\geq n\omega^2 \}.
\end{equation}
The Parseval's identity implies
\begin{equation}
I_3(s)=\int_{E^3_s}|\textbf{p}\cdot\widehat{\textbf{f}}(\xi_1,\xi_2,\omega)|^2d\xi_1d\xi_2d\omega\leq\frac{CM^2}{s^2},
\end{equation}
where $C>0$ depends on $n$.

Now we proof Theorem \ref{TH4}.
We assume that $\epsilon<e^{-1}$, otherwise the estimate is obvious. Let 
\begin{eqnarray}
s=
\begin{cases}
\frac{1}{(((R+cT)+3)\pi)^{\frac{1}{3}}}b^{\frac{2}{3}}|\ln\epsilon|^{\frac{1}{4}} & \textrm{if}\ |\ln\epsilon|^{\frac{1}{4}}>2^{\frac{1}{4}}b^{\frac{1}{3}}(((R+cT)+3)\pi)^{\frac{1}{3}},\\
b & \textrm{if} \ |\ln\epsilon|^{\frac{1}{4}}>2^{\frac{1}{4}}b^{\frac{1}{3}}(((R+cT)+3)\pi)^{\frac{1}{3}}.
\end{cases}
\end{eqnarray}

Case (i): $|\ln\epsilon|^{\frac{1}{4}}>2^{\frac{1}{4}}b^{\frac{1}{3}}(((R+cT)+3)\pi)^{\frac{1}{3}}$. 
One can check that 
\[s>2^{\frac{1}{4}}b.\]
Thus, using Lemma \ref{2} we obtain
\begin{eqnarray}
|I(s)|&\leq & CM^2\,b^2 e^{((R+cT)+3)s^2}\epsilon^{2\mu(s)}\cr
&\leq & CM^2\,b^2\,e^{-2\mu(s)|\ln\epsilon|+((R+cT)+3)s^2}\cr
&\leq & CM^2\,b^2\,e^{-\frac{-2|\ln\epsilon|}{\pi}(\frac{b}{s})^2+\frac{((R+cT)+3)}{(((R+cT)+3)\pi)^{\frac{1}{3}}}b^{\frac{2}{3}}|\ln\epsilon|^{\frac{1}{4}}}\cr
&\leq & CM^2\,b^2\, e^{-2\big(\frac{((R+cT)+3)^2}{\pi}\big)^{\frac{1}{3}}b^{\frac{2}{3}}|\ln\epsilon|^{\frac{1}{2}}\big(1-\frac{1}{2} |\ln\epsilon|^{-\frac{1}{4}} \big)},
\end{eqnarray}
where $C>0$ depends on $R$, $T$, $\delta$ and $n$.
Noting that 
$\frac{1}{2}|\ln\epsilon|^{-\frac{1}{4}}<\frac{1}{2}$ and $\big(\frac{(2(R+2T)+3)^2}{\pi}\big)^{\frac{1}{3}}>1$,
we have 
\[|I(s)|\leq C M^2\,b^2\,e^{-b^{\frac{2}{3}}|\ln\epsilon|^{\frac{1}{2}}}.\]
Using the elementary inequality 
\[e^{-t}\leq \frac{6!}{t^6}, \quad t>0,\]
we get 
\begin{equation}
|I(s)|\leq\frac{C\,b^2\,M^2}{\big(b^{\frac{2}{3}}|\ln\epsilon|^{\frac{1}{2}}\big)^6}\leq\frac{CM^2}{b^{2}|\ln\epsilon|^{3}},
\end{equation}
where $C>0$ depends on $R$, $T$, $\delta$ and $n$.

Case (ii): $|\ln\epsilon|^{\frac{1}{4}}>2^{\frac{1}{4}}b^{\frac{1}{3}}(((R+cT)+3)\pi)^{\frac{1}{3}}$. In this case we have $s=b $, and from (\ref{3Cb}),
\[|I(s)|=|I(b)| \leq |E_{b}|b^2\epsilon^2.\]
Combining the estimates of $I(s)$ and $I_1(s)$, we obtain 
\begin{eqnarray}
\int_{E_s}|\textbf{p}\cdot\widehat{\textbf{f}}(\tilde{\xi},\omega)|^2\,d\tilde{\xi}\,d\omega &+&\int_{E_s^1}|\textbf{p}\cdot\widehat{\textbf{f}}(\tilde{\xi},\omega)|^2\,d\tilde{\xi}\,d\omega
=I(s)+I_1(s)\cr
&\leq& C\Big(b^5\,b^2\epsilon^2+
+\frac{M^2}{b^{2}|\ln\epsilon|^{3}}+\frac{M^2}{\big(b^{\frac{2}{3}}|\ln\epsilon|^{\frac{1}{4}}\big)^2}\Big),
\end{eqnarray}
where $C>0$ depends on $R$, $T$, $\delta$ and $n$.

Now we estimate $I_2(s)+I_3(s)=\int_{E_s^2}|\textbf{p}\cdot\widehat{\textbf{f}}(\tilde{\xi},\omega)|^2\,d\tilde{\xi}\,d\omega+\int_{E_s^3}|\textbf{p}\cdot\widehat{\textbf{f}}(\tilde{\xi},\omega)|^2\,d\tilde{\xi}\,d\omega$.
Let 
\begin{eqnarray}\nonumber
s=
\begin{cases}
\frac{1}{(((R+cT)+3)\pi)^{\frac{1}{3}}}b^{\frac{2}{3}}|\alpha\ln\epsilon|^{\frac{1}{4}} & \textrm{if}\ |\alpha\ln\epsilon|^{\frac{1}{4}}>2^{\frac{1}{4}}b^{\frac{1}{3}}(((R+cT)+3)\pi)^{\frac{1}{3}},\\
b & \textrm{if} \ |\alpha\ln\epsilon|^{\frac{1}{4}}>2^{\frac{1}{4}}b^{\frac{1}{3}}(((R+cT)+3)\pi)^{\frac{1}{3}}.
\end{cases}
\end{eqnarray}
Repeating similar steps, we find out
\begin{eqnarray}
\nonumber
\int_{E_s^2}|\textbf{p}\cdot\widehat{\textbf{f}}(\tilde{\xi},\omega)|^2\,d\tilde{\xi}\,d\omega +\int_{E_s^3}|\textbf{p}\cdot\widehat{\textbf{f}}(\tilde{\xi},\omega)|^2\,d\tilde{\xi}\,d\omega=I_2(s)+I_3(s)\cr
\leq C\Big(b^5\,e^{2b(1-\alpha)}\epsilon^{2\alpha}
+\frac{M^2}{b^{2}|\alpha\ln\epsilon|^{3}}+\frac{M^2}{\big(b^{\frac{2}{3}}|\alpha\ln\epsilon|^{\frac{1}{4}}\big)^2}\Big),
\end{eqnarray}
where $C>0$ depends on $R$, $T$, $\delta$, $M$ and $n$.

Since $b^{2}|\ln\epsilon|^{3}>b^{\frac{4}{3}}|\ln\epsilon|^{\frac{1}{2}}$ and $b^{2}|\alpha\ln\epsilon|^{3}>b^{\frac{4}{3}}|\alpha\ln\epsilon|^{\frac{1}{2}}$ when $b>1$ and $|\ln\epsilon|>1$.
Applying $b^{\frac{4}{3}}|\ln\epsilon|^{\frac{1}{2}}>b^{\frac{4}{3}}|\alpha\ln\epsilon|^{\frac{1}{2}}$, we obtain 
\begin{equation}\label{pf1}
\begin{split}
\int_{\R^3}|\textbf{p}\cdot\widehat{\textbf{f}}|^2d\tilde{\xi} d\omega
&=\int_{E_s}|\textbf{p}\cdot\widehat{\textbf{f}}|^2\,d\tilde{\xi}\,d\omega+\int_{E_s^1}|\textbf{p}\cdot\widehat{\textbf{f}}|^2\,d\tilde{\xi}\,d\omega+\int_{E_s^2}|\textbf{p}\cdot\widehat{\textbf{f}}|^2\,d\tilde{\xi}\,d\omega+\int_{E_s^3}|\textbf{p}\cdot\widehat{\textbf{f}}|^2\,d\tilde{\xi}\,d\omega\\
&\leq C\Big(b^7\epsilon^2+b^5e^{2b(1-\alpha)}\,\epsilon^{2\alpha}+\frac{M^2}{b^2|\ln\epsilon|^3}+\frac{M^2}{b^{\frac{4}{3}}|\ln\epsilon|^{\frac{1}{2}}}+\frac{M^2}{b^2|\alpha\ln\epsilon|^3}+\frac{M^2}{b^{\frac{4}{3}}|\alpha\ln\epsilon|^{\frac{1}{2}}}\Big)\\
&\leq C(b^7\epsilon^2+b^5e^{2b(1-\alpha)}\,\epsilon^{2\alpha}
+\frac{M^2}{b^{\frac{4}{3}}|\alpha\ln\epsilon|^{\frac{1}{2}}}),
\end{split}
\end{equation}
where $C>0$ depends on $R$, $T$, $\delta$, $M$ and $n$.

Repeating similar steps, we get that 
\begin{equation}\label{pf2}
\int_{\R^3}|\textbf{q}\cdot\widehat{\textbf{f}}|^2d\tilde{\xi} d\omega\leq  C(b^7\epsilon^2+b^5e^{2b(1-\alpha)}\,\epsilon^{2\alpha}
+\frac{M^2}{b^{\frac{4}{3}}|\alpha\ln\epsilon|^{\frac{1}{2}}}),
\end{equation}
where $C>0$ depends on $R$, $T$, $\delta$, $M$ and $n$.
Using the Pythagorean theorem yields 
\begin{equation}\label{pqf}
\begin{split}
||\textbf{f}||^2_{L^2(\R^3)^3}&=||\widehat{\textbf{f}}||^2_{L^2(\R^3)^3}\\
&=\int_{\R^3}|\textbf{p}\cdot\widehat{\textbf{f}}|^2d\tilde{\xi} d\omega+\int_{\R^3}|\textbf{q}\cdot\widehat{\textbf{f}}|^2d\tilde{\xi} d\omega.
\end{split}
\end{equation}
Combining (\ref{pf1})-(\ref{pf2}) and (\ref{pqf}), we obtain the stability estimate (\ref{es3}).
This completes the proof.

%\section{Conclusion}
%In this work, we have shown the stability estimates of the $L^2$-norm of source term with measurements on a sphere over a enough long time interval. Moreover, we have proved that the increasing stability can be achieved with large $b$ in the full space. Future work are to investigate the increasing stability with partial data or more common sources corresponding to evolution equations.

\section{Acknowledgment}
The work of S. Si is supported by the Natural Science Foundation of Shandong Province, China(No. ZR202111240173). The author would like to thank Guanghui Hu for helpful discussions.

%\appendix
% \section{Two theorems}
% We list two important theorems used in this paper. The following is Tichmarsh convolution theorem \cite{titchmarsh}.
% \begin{theorem}\label{A1}
% If $\varphi(t)$ and $\psi(t)$ are integrable functions, such that
% \begin{equation}
% \nonumber
% \int_0^x \varphi(t)\psi(x-t)\mathrm{d} t=0
% \end{equation}
% almost everywhere in interval $0<x<\kappa$, then $\varphi(t)=0$ almost everywhere in $(0,\lambda)$, and $\psi(t)=0$
% almost everywhere in $(0,\mu)$, where $\lambda+\mu\geqslant\kappa$.
% \end{theorem}
%
% Next is Firtz John's global Holmgren theorem  (\cite[\S 3.5]{Bers1964}, \cite[\S 1.7]{Rauch1991}).
%Assume that $u$ is a local solution of the $\mathit{m}$th order linear Cauchy problem
%\begin{align}\label{eqA}
%\begin{cases}
%  \mathit{P}u=f   &\textrm{in} \ \Omega,\\
%\partial^{\alpha}u=0  &\textrm{on} \ \Sigma \ \textrm{for all}\  |\alpha|\leq \mathit{m}-1.
%\end{cases}
%\end{align}
%\begin{theorem}
%Suppose that $\mathit{P}$ is a linear partial differential operator with coefficients real analytic on a neighborhood of
%$\tilde{x}\in \mathbb{R}^3 $, and $\Sigma$ is a $C^{\mathit{m}}$ embedded hypersurface noncharacteristic at $\tilde{x}$. If $u$ is a $C^{\mathit{m}}$ solution of $(\ref{eqA})$ on a neighborhood of $\tilde{x}$, then $u$ vanishes on a neighborhood of $\tilde{x}$.
%\end{theorem}
%In this paper, we take the surface $\Sigma$ as the sphere, which is obviously noncharacteristic. And the coefficients are all constants, which means that the theorem works.


\begin{thebibliography}{100}

\bibitem{Arridge1999Optical} S.~R. Arridge, Optical tomography in medical imaging, Inverse Problems, 15(1999),41--93.

\bibitem{AM2006} R.~Albanese and P.~Monk, The inverse source problem for Maxwell’s equations, Inverse Problems, 22 (2006), 1023--1035.


\bibitem{Bleistein1977Nonuniqueness} N.~Bleistein and J.~K. Cohen,
Nonuniqueness in the inverse source problem in acoustics and electromagnetics, J. Mathematical Phys, 18(1977),194--201.

\bibitem{Bao2010multi-frequency} G.~Bao, J.~Lin, and F.~Triki,
A multi-frequency inverse source problem,
 J. Differential Equations, 249(2010), 3443--3465.

\bibitem{BaoStability2020} G.~Bao, P.~Li, and Y.~Zhao,
Stability for the inverse source problems in elastic and
  electromagnetic waves, J. Math. Pures Appl. 9(2020), 122--178.

\bibitem{BaoInverse}  G.~Bao, P.~Li, J.~Lin, and F.~Triki, Inverse scattering problems
  with multi-frequencies, Inverse Problems, 31 (2015),093001.

\bibitem{BaoLu}  G.~Bao, S.~Lu, W.~Rundell, and B.~Xu,  A recursive algorithm for
  multifrequency acoustic inverse source problems, SIAM J. Numer. Anal., 53
  (2015), 1608--1628.

\bibitem{Mourad-Ben}  M.~Bellassoued and I.~Ben~A\"{\i}cha,  Stable determination outside a
  cloaking region of two time-dependent coefficients in an hyperbolic equation
  from Dirichlet to Neumann map, J. Math. Anal. Appl., 449 (2017),46--76.

\bibitem{BaoHu}  G.~Bao, G.~Hu, Y.~Kian, and T.~Yin,  Inverse source problems in
  elastodynamics, Inverse Problems, 34 (2018), ~045009.
  
\bibitem{BCLZ2014} G.~Bao, S.-N.~Chow, P.~Li, H.-M. ~Zhou, An inverse random source problem for the Helmholtz equation, Math. Compet,
83 (2014), 215--233.


\bibitem{ABF2002} H.~Ammari, G.~Bao, J.~Fleming, An inverse source problem for Maxwell’s equations in magnetoencephalography, SIAM
J. Appl. Math., 62 (2002), 1369--1382.  

\bibitem{Choulli2006Some} M.~Choulli and M.~Yamamoto, Some stability estimates in determining
  sources and coefficients, J. Inverse Ill-Posed Probl., 14(2006), 355--373.

\bibitem{Cheng2016Increasing} J.~Cheng, V.~Isakov, and S.~Lu, Increasing stability in the inverse
  source problem with many frequencies, J. Differential Equations, 260 (2016),
4786--4804.

\bibitem{Cazenave1998semilinear}
T.~Cazenave and A.~Haraux,  An introduction to semilinear evolution
  equations, vol.~13 of Oxford Lecture Series in Mathematics and its
  Applications, The Clarendon Press, Oxford University Press, New York, 1998.

\bibitem{Bers1964} L.~Bers, F.~John, and M.~Schechter,  Partial differential equations,
  Lectures in Applied Mathematics, Vol. III, Interscience Publishers John Wiley
 \& Sons, Inc.\, New York-London-Sydney, 1964.


\bibitem{Eller2009Acoustic}
M.~Eller and N.~P. Valdivia,  Acoustic source identification using
  multiple frequency information, Inverse Problems, 25 (2009), 115005.

\bibitem{Evans2010}  L.~C. Evans,  Partial differential equations, vol.~19 of Graduate
  Studies in Mathematics, American Mathematical Society, Providence,
  second, 2010.


\bibitem{HLLZ2018} G.~Hu, P.~Li, X.~Liu and Y.~Zhao,  Inverse source problems in electrodynamics, Inverse Probl. Imaging, 12
  (2018), 1411--1428.

\bibitem{Hu2020Inverse} G.~Hu, and Y.~Kian,  Uniqueness and stability for the recovery of a time-dependent source in elastodynamics, Inverse Probl. Imaging, 14(2020), 463--487.

\bibitem{Hauer2005On} K.-H. Hauer, L.~K\"{u}hn, and R.~Potthast,
 On uniqueness and non-uniqueness for current reconstruction from
  magnetic fields, Inverse Problems,21(2005),955--967.
  
\bibitem{Hsiao2008Boundary} G.~C. Hsiao and W.~L. Wendland,  Boundary integral equations,
  vol.~164 of Applied Mathematical Sciences, Springer-Verlag, Berlin, 2008.  
  

\bibitem{Isakov1990inverse}  V.~Isakov, Inverse source problems, volume 34 of Mathematical Surveys and Monographs. American Mathematical Society, Providence (1990).



\bibitem{IsakovOn}  V.~Isakov, On uniqueness of
  obstacles and boundary conditions from restricted dynamical and scattering
  data, Inverse Probl. Imaging, 2 (2008), 151--165.


\bibitem{Ikehata}  R.~Ikehata and T.~Matsuyama,  {$L^2$}-behaviour of solutions to the
  linear heat and wave equations in exterior domains, Sci. Math. Jpn., 55
  (2002), 33--42.
  
\bibitem{Jiang2016Inverse}  D.~Jiang, Y.~Liu, and M.~Yamamoto,  Inverse source problem for the
  hyperbolic equation with a time-dependent principal part, J. Differential Equations, 262 (2017), 653--681.  


\bibitem{Klibanov1999Inverse}  M.~V.~ Klibanov,  Carleman estimates and inverse problems: uniqueness
  and convexification of multiextremal objective functions,  (2002),219--252.



\bibitem{Li2017Increasing}  P.~Li and G.~Yuan,  Increasing stability for the inverse source
  scattering problem with multi-frequencies, Inverse Probl. Imaging, 11
  (2017),745--759.

\bibitem{McLean2000Strongly}  W.~McLean,  Strongly elliptic systems and boundary integral
  equations, Cambridge University Press, Cambridge, 2000.

\bibitem{MD1999} E.~Marengo, A.~Devaney, The inverse source problem of electromagnetics: linear inversion formulation and minimum energy
solution, IEEE Trans. Antennas Propag, 47 (1999), 410--412.

\bibitem{Rauch1991}  J.~Rauch,  Partial differential equations, vol.~128 of Graduate
  Texts in Mathematics, Springer-Verlag, New York, 1991.
  
\bibitem{RK1994} V.G.~Romanov and S.I.~Kabanikhin, Inverse Problems for Maxwell’s Equations, VSP, Utrecht, 1994.
  

\bibitem{titchmarsh} E.~C. Titchmarsh, The Zeros of Certain Integral Functions,
  Proc. London Math. Soc., 25 (1926), 283--302.


\bibitem{Vessella}  S.~Vessella,  A continuous dependence result in the analytic
  continuation problem, Forum Math., 11 (1999), 695--703.

\bibitem{Yamamoto1999Stability} M.~Yamamoto,  Stability, reconstruction formula and regularization
  for an inverse source hyperbolic problem by a control method, Inverse Problems, 11 (1995), 481--496.

\bibitem{Yamamoto1999Uniqueness} M.~Yamamoto,  Uniqueness and
  stability in multidimensional hyperbolic inverse problems, J. Math. Pures
  Appl., 78 (1999), 65--98.















\end{thebibliography}
\end{document}